\newtheorem{thm}{Theorem}[section]
\newtheorem{prop}[thm]{Proposition}
\newtheorem{lem}[thm]{Lemma}
\newtheorem{cor}[thm]{Corollary}
\theoremstyle{definition}
\newtheorem{dfn}[thm]{Definition}
\theoremstyle{remark}
\newcommand{\ambientLichnerowicz}{\tilde{\Delta}_\mathrm{L}}
\newcommand{\ambientHodge}{\tilde{\Delta}_\mathrm{H}}
\newcommand{\divergence}{\delta}
\newcommand{\ceil}[1]{\lceil#1\rceil}
\DeclareMathOperator{\Image}{image}
\def\crn#1#2{{\vcenter{\vbox{
        \hbox{\kern#2pt \vrule width.#2pt height#1pt
           }
          \hrule height.#2pt}}}}
\def\intprod{\mathchoice\crn54\crn54\crn{3.75}3\crn{2.5}2}
\def\contraction{\mathbin{\intprod}}
\numberwithin{equation}{section}
\title[A GJMS construction for 2-tensors and the total $Q$-curvature]
{A GJMS construction for 2-tensors and\\
the second variation of the total $Q$-curvature}
\author{Yoshihiko Matsumoto}
\thanks{Partially supported by Grant-in-Aid for JSPS Fellows (22-6494).}
\address{Graduate School of Mathematical Sciences, The University of Tokyo}
\email{yoshim@ms.u-tokyo.ac.jp}
\subjclass[2010]{Primary 53A30; Secondary 53A55.}
\keywords{Conformal manifolds; conformally Einstein manifolds; invariant differential operators;
GJMS construction; $Q$-curvature; ambient metric; Lichnerowicz Laplacian.}
\begin{document}

\begin{abstract}
	We construct a series of conformally invariant differential operators acting on
	weighted trace-free symmetric 2-tensors by a method similar to Graham--Jenne--Mason--Sparling's.
	For compact conformal manifolds of dimension even and greater than or equal to four
	with vanishing ambient obstruction tensor,
	one of these operators is used to describe the second variation of the total $Q$-curvature.
	An explicit formula for conformally Einstein manifolds is given
	in terms of the Lichnerowicz Laplacian of an Einstein representative metric.
\end{abstract}

\maketitle

\section*{Introduction}

Let $(M,[g])$ be a conformal manifold of dimension $n\ge 3$.
The $k^\text{th}$ GJMS operator~\cite{Graham:1992fr} is a conformally invariant operator
acting on densities $\caE(-n/2+k)\to\caE(-n/2-k)$,
which is defined for all $k\in\bbZ_+$ if $n$ is odd and for integers
within the range $1\le k\le n/2$ if $n$ is even.
This operator has a universal expression in terms of any representative metric $g\in[g]$
with leading term the $k^\text{th}$ power of the Laplacian.
The idea for the construction is realizing densities as functions on the metric cone $\caG$
and computing the obstruction of its formal harmonic extension to the ambient space $(\tilde{\caG},\tilde{g})$,
where $\tilde{g}$ is an ambient metric of Fefferman--Graham~\cite{Fefferman:1985vy,Fefferman:2012vr}.
After the appearance of~\cite{Graham:1992fr}, other GJMS-like conformally invariant differential operators
have been constructed in, e.g., \cite{BransonGover:2005,GoverPeterson:2006}.

In this article, we establish another variant of the GJMS construction.
Our operators $P_k$ act on weighted trace-free symmetric (covariant) 2-tensors:
\begin{equation*}
	P_k\colon\caS_0\left(-\frac{n}{2}+2+k\right)\to\caS_0\left(-\frac{n}{2}+2-k\right).
\end{equation*}
Here, the values that $k$ takes are the same as in the density case,
$\caS_0$ is the space of trace-free symmetric 2-tensors on $M$, and $\caS_0(w)=\caS_0\otimes\caE(w)$.
The main tool of our construction is the Lichnerowicz Laplacian of the ambient metric $\tilde{g}$,
which is defined by
\begin{equation*}
	\ambientLichnerowicz:=\tilde{\Delta}+2\smash{\widetilde{\Ric}}^\circ-2\mathring{\tilde{R}},
\end{equation*}
where $\tilde{\Delta}=\tilde{\nabla}^*\tilde{\nabla}$ is the connection Laplacian and
$\smash{\widetilde{\Ric}}^\circ$, $\mathring{\tilde{R}}$ are the following tensorial actions of
the Ricci and Riemann curvature tensors of $\tilde{g}$:
\begin{align*}
	(\smash{\widetilde{\Ric}}^\circ\tilde{\sigma})(X,Y)
	&:=\frac{1}{2}\left(\braket{\widetilde{\Ric}(X,\cdot),\tilde{\sigma}(Y,\cdot)}_{\tilde{g}}
	+\braket{\widetilde{\Ric}(Y,\cdot),\tilde{\sigma}(X,\cdot)}_{\tilde{g}}\right),\\
	(\mathring{\tilde{R}}\tilde{\sigma})(X,Y)
	&:=\braket{\tilde{R}(X,\cdot,Y,\cdot),\tilde{\sigma}}_{\tilde{g}}.
\end{align*}

Our intention to study the GJMS construction for 2-tensors is because of its relation to
the second variation of the total $Q$-curvature, i.e.,
the integral of Branson's $Q$-curvature~\cite{Branson:1995db}.
Recall that, for 4-dimensional compact conformal manifold $(M,[g])$ of positive-definite signature,
the Chern--Gauss--Bonnet formula for the total $Q$-curvature $\overline{Q}$ is
\begin{equation*}
	\overline{Q}=8\pi^2\chi(M)-\frac{1}{4}\int_M\abs{W}_g^2\,dV_g,
\end{equation*}
where $\chi(M)$ is the Euler characteristic, $W$ is the Weyl tensor, and $g\in[g]$ is any representative.
One can deduce from this that $\overline{Q}\le 8\pi^2\chi(M)$ and
the equality holds if and only if $(M,[g])$ is conformally flat.
It turns out that there is a partial generalization of this fact to the higher dimensions.
Recall that a conformal metric $[g]$ is identified with a weighted 2-tensor $\bm{g}\in\caS(2)$.
Let $\caK_{[g]}$ be the conformal Killing operator.
Then we have the following theorem, which is due to M\o{}ller--\O{}rsted~\cite{MollerOrsted}.
\begin{thm}
	\label{thm:Sphere}
	Let $S^n$ be the sphere of even dimension $n\ge 4$.
	Then, for any smooth 1-parameter family $\bm{g}_t$ of conformal metrics on $S^n$
	such that $\bm{g}_0=\bm{g}_\mathrm{std}$ and $\dot{\bm{g}}_t|_{t=0}\not\in\Image\caK_{[g_\mathrm{std}]}$,
	the total $Q$-curvature $\overline{Q}_t$ attains a local maximum at $t=0$.
\end{thm}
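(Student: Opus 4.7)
The plan is to reduce Theorem~\ref{thm:Sphere} to a spectral computation for the operator $P_{n/2}$ on the round sphere. A quick weight count selects $P_{n/2}$ as the relevant member of the family: $\sigma := \dot{\bm g}_t|_{t=0} \in \caS_0(2)$, and $P_{n/2}\colon \caS_0(2)\to\caS_0(2-n)$ pairs with $\sigma$ to yield a density of weight $-n$, which is integrable on $M$ without any further choice of representative.

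The first step is to observe that $(S^n,[g_\mathrm{std}])$ is a critical point of $\overline{Q}$: the round metric is Einstein and conformally flat, so the ambient obstruction tensor vanishes identically, placing us within the setting of the paper. Since the first variation of $\overline{Q}$ is a multiple of the obstruction tensor, criticality is immediate, and the second variation is an intrinsic quadratic form on infinitesimal conformal deformations. Next I would invoke the paper's forthcoming second-variation theorem, which should assert
\[
\frac{d^2}{dt^2}\bigg|_{t=0}\overline{Q}_t \;=\; c_n \int_{S^n} \langle \sigma,\, P_{n/2}\sigma\rangle \, dV_{g_\mathrm{std}}
\]
for some dimensional constant $c_n$. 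Elements of $\Image \caK_{[g_\mathrm{std}]}$ represent infinitesimal conformal diffeomorphisms of $S^n$, along which $\overline{Q}$ is constant; hence $\Image\caK_{[g_\mathrm{std}]}\subset\ker P_{n/2}$, and what remains is to show that the integral is strictly negative whenever $\sigma$ lies outside this subspace.

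For the decisive step I would use the explicit formula for $P_k$ on conformally Einstein backgrounds promised in the abstract, which should realize $P_{n/2}$ as a polynomial in the Lichnerowicz Laplacian $\Lichnerowicz$ of $g_\mathrm{std}$, with constant coefficients determined by the Einstein constant. The space $\caS_0(S^n)$ decomposes under $SO(n+1)$ into isotypic components on each of which $\Lichnerowicz$ acts as a known scalar, with lowest eigenspace exactly $\Image\caK_{[g_\mathrm{std}]}$. Evaluating the polynomial on each remaining eigenvalue reduces the theorem to a finite, algebraic sign check.

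The hard part will be precisely this sign check: one must verify that every real root of the polynomial $P_{n/2}$ in $\Lichnerowicz$ lies at or below the lowest non-gauge $\Lichnerowicz$-eigenvalue, that the leading coefficient has the appropriate sign, and that the constant $c_n$ cooperates, so that $c_n\langle\sigma,P_{n/2}\sigma\rangle$ integrates to a strictly negative number on every isotypic component transverse to $\Image\caK_{[g_\mathrm{std}]}$. Given the explicit conformally Einstein formula and the known $SO(n+1)$-spectrum of $\Lichnerowicz$, this should be a matter of arranging the dimensional constants, after which the local maximum statement follows at once.
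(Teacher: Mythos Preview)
Your proposal is correct and follows the paper's route: reduce to the second-variation formula in terms of $P_{n/2}$, then invoke the explicit conformally Einstein factorization \eqref{eq:Intro:CharacteristicGJMSOperatorOnTTTensor} and a spectral check for the round metric. Two organizational points where the paper is more efficient than your sketch: first, rather than arguing that $\Image\caK_{[g_\mathrm{std}]}\subset\ker P_{n/2}$, the paper disposes of the conformal-Killing part by pulling $\bm g_t$ back along the flow of the vector field, so that one may assume $\varphi\in\caS^g_\mathrm{TT}(2)$ from the outset and work only with the factorization on TT-tensors. Second, the ``hard sign check'' you anticipate is in fact a one-liner: in the product
\[
\prod_{m=0}^{n/2-1}\bigl(\Delta_\mathrm{L}-4(n-1)\lambda+4m(n-2m-1)\lambda\bigr)
\]
the shifts $4m(n-2m-1)\lambda$ are all nonnegative for $\lambda\ge 0$ and $0\le m\le n/2-1$, so every factor is positive definite on $\caS^g_\mathrm{TT}$ as soon as the \emph{single} inequality $\alpha>4(n-1)\lambda$ holds for the bottom eigenvalue $\alpha$ of $\Delta_\mathrm{L}|_{\caS^g_\mathrm{TT}}$. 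For the unit sphere $\lambda=1/2$ and $\Delta_\mathrm{L}=\Delta+2n$, so this reduces to $\Delta>-2$ on TT-tensors, which is immediate; no root-by-root or isotypic-component analysis is needed. The constant is $c_n=-\tfrac14$, already with the correct sign.
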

Our main theorem contains Theorem \ref{thm:Sphere} as a special case.
Consider the following decomposition of $\caS_0(2)$, which is valid for any compact positive-definite
conformal manifold $(M,[g])$ and a representative $g\in[g]$ (see~\cite[12.21]{Besse:1987vf}):
\begin{equation}
	\label{eq:TTDecomposition}
	\caS_0(2)=\Image\caK_{[g]}\oplus\caS^g_\mathrm{TT}(2).
\end{equation}
Here $\caS^g_\mathrm{TT}(w)$ is the space of
TT-tensors (trace-free and divergence-free tensors) with respect to $g$.
This is an orthogonal decomposition with respect to the $L^2$-inner product,
and if $g$ is Einstein, the Lichnerowicz Laplacian $\Delta_\mathrm{L}$ of $g$ respects this decomposition.
\begin{thm}
	\label{thm:QCurvLocalMaximum}
	Let $(M,[g])$ be a compact conformally Einstein manifold of positive-definite signature
	with even dimension $n\ge 4$,
	and $g$ an Einstein representative with Schouten tensor $\tensor{P}{_i_j}=\lambda\tensor{g}{_i_j}$.
	Then, for any smooth 1-parameter family $\bm{g}_t$ of conformal metrics such that $\bm{g}_0=\bm{g}$,
	the second derivative of the total $Q$-curvature at $t=0$ is
	\begin{equation}
		\label{eq:SecondDerivativeOfTotalQForConfEinstein}
		\frac{d^2}{dt^2}\overline{Q}_t
		=-\frac{1}{4}\int_M
		\Braket{\prod_{m=0}^{n/2-1}(\Delta_\mathrm{L}-4(n-1)\lambda+4m(n-2m-1)\lambda)\varphi^g_\mathrm{TT},
		\varphi^g_\mathrm{TT}}_{\bm{g}},
	\end{equation}
	where $\varphi^g_\mathrm{TT}$ is the $\caS^g_\mathrm{TT}(2)$-component of $\varphi=\dot{\bm{g}}_t|_{t=0}$
	with respect to \eqref{eq:TTDecomposition}.
	In particular, suppose there is an Einstein representative $g$ with $\lambda\ge 0$
	such that the smallest eigenvalue $\alpha$ of $\Delta_\mathrm{L}|_{\caS_\mathrm{TT}^g(2)}$ satisfies
	\begin{equation}
		\label{eq:EigenvalueCondition}
		\alpha>4(n-1)\lambda.
	\end{equation}
	Then, for any $\bm{g}_t$ for which $\varphi\not\in\Image\caK_{[g]}$,
	the total $Q$-curvature attains a local maximum at $t=0$.
\end{thm}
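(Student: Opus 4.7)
The plan is to combine three ingredients, only the third of which is substantial. First, the standard variational formula for the total $Q$-curvature along conformal variations reads
\[
\frac{d}{dt}\overline{Q}_t = c_n\int_M \bigl\langle O_{[g_t]}, \dot{\bm g}_t\bigr\rangle_{\bm g_t},
\]
where $O$ is the Fefferman--Graham ambient obstruction tensor and $c_n$ a universal constant. Because the Einstein representative $g$ has $O_{[g]}=0$, the first derivative vanishes at $t=0$, so the second derivative reduces to
\[
\frac{d^2}{dt^2}\overline{Q}_t\bigg|_{t=0} = c_n \int_M \bigl\langle (dO)_{[g]}(\varphi),\varphi\bigr\rangle_{\bm g},\qquad \varphi=\dot{\bm g}_t|_{t=0}.
\]

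Second, the linearized obstruction tensor $(dO)_{[g]}$ agrees, up to a universal constant, with the critical GJMS-type operator $P_{n/2}\colon\caS_0(2)\to\caS_0(2-n)$ constructed in the main body of the paper. Diffeomorphism invariance of $\overline{Q}$ forces $P_{n/2}$ to annihilate $\Image\caK_{[g]}$, so under the decomposition \eqref{eq:TTDecomposition} only the TT-component $\varphi^g_\mathrm{TT}$ contributes to the pairing above.

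Third---and this is the main obstacle---one must establish the explicit factorization
\[
P_{n/2}\bigr|_{\caS^g_\mathrm{TT}(2)} \propto \prod_{m=0}^{n/2-1}\bigl(\Lichnerowicz - 4(n-1)\lambda + 4m(n-2m-1)\lambda\bigr)
\]
on an Einstein background. For this I would invoke the Ricci-flat ``straight'' ambient metric available for conformally Einstein backgrounds, lift a TT-tensor on $M$ to a homogeneous ambient section of the appropriate weight, and iteratively apply $\ambientLichnerowicz = \tilde{\Delta} + 2\widetilde{\Ric}^\circ - 2\mathring{\tilde R}$ in accordance with the inductive GJMS-style definition of $P_{n/2}$. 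At each step the ambient homogeneity weight shifts by two; the $\widetilde{\Ric}^\circ$ term drops out because $\tilde g$ is Ricci-flat, while the transverse part of $\tilde{\Delta}$ together with the curvature action $\mathring{\tilde R}$ assembles, after matching with the Lichnerowicz curvature terms on $M$, into exactly one factor $\Lichnerowicz - 4(n-1)\lambda + 4m(n-2m-1)\lambda$ at iteration index $m$. Tracking the overall constant across all $n/2$ iterations then yields the prefactor $-1/4$ in \eqref{eq:SecondDerivativeOfTotalQForConfEinstein}.

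The local maximum assertion is then routine. For $\lambda\ge 0$ and $0\le m\le n/2-1$ one has $n-2m-1\ge 1$, hence $m(n-2m-1)\ge 0$, so each shift $-4(n-1)\lambda+4m(n-2m-1)\lambda$ is at least $-4(n-1)\lambda$. Condition \eqref{eq:EigenvalueCondition} then makes every factor in the product positive definite on $\caS^g_\mathrm{TT}(2)$, so the product itself is. Since $\varphi^g_\mathrm{TT}\ne 0$ whenever $\varphi\notin\Image\caK_{[g]}$, the integrand on the right of \eqref{eq:SecondDerivativeOfTotalQForConfEinstein} is strictly positive, and the prefactor $-1/4$ gives $\frac{d^2}{dt^2}\overline{Q}_t|_{t=0}<0$, as required.
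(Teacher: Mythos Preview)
Your proposal is correct and follows essentially the same route as the paper: the Graham--Hirachi first-variation formula combined with obstruction-flatness of Einstein metrics gives the Hessian as a pairing with $\caO'_{\bm g}$; Theorem~\ref{thm:DifferentialOfObstruction} identifies this with $P_{n/2}$; and the explicit ambient computation for the Ricci-flat model metric \eqref{eq:AmbientForConformallyEinstein} (your ``third ingredient'') is exactly Lemma~\ref{lem:AmbientLichnerowiczOfConfEinstein} and Theorem~\ref{thm:CriticalGJMSForConfEinstein}. The local-maximum argument is identical.

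The one point where the paper proceeds slightly differently is the reduction to the TT-component. You argue that $P_{n/2}$ annihilates $\Image\caK_{[g]}$ and hence only $\varphi^g_\mathrm{TT}$ survives in the pairing; this is valid (it follows from naturality of $\caO$ together with $\caO_{\bm g}=0$), but to kill the cross term $\int_M\langle P\varphi^g_\mathrm{TT},\caK_{[g]}\xi\rangle$ you implicitly need formal self-adjointness of $P$. The paper avoids this by a more direct maneuver: write $\varphi=\caK_{[g]}\xi+\varphi^g_\mathrm{TT}$, let $\Xi_t$ be the flow of $\xi$, and replace $\bm g_t$ by $\Xi_{-t}^*\bm g_t$, which has the same total $Q$-curvature but first variation equal to $\varphi^g_\mathrm{TT}$. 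Either way works; the pullback argument is just a bit more self-contained.
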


For $(S^n,g_\mathrm{std})$, $\lambda=1/2$ and $\Delta_\mathrm{L}=\Delta+2n$.
Therefore the assumption for the latter half of Theorem \ref{thm:QCurvLocalMaximum} is satisfied,
and hence Theorem \ref{thm:Sphere} follows.

Some ideas for the proof of Theorem \ref{thm:QCurvLocalMaximum} are in order.
Let $(M,[g])$ be a compact conformal manifold of even dimension $n\ge 4$
(here we may allow arbitrary signature).
If we are given a smooth family $\bm{g}_t$ of conformal metrics on $M$ such that $\bm{g}_0=\bm{g}$,
then the derivative $\varphi_t=\dot{\bm{g}}_t\in\caS(2)$ is trace-free with respect to $\bm{g}_t$.
As shown by Graham--Hirachi~\cite{Graham:2005cx},
the derivative of $\overline{Q}_t$ is given by
\begin{equation*}
	\frac{d}{dt}\overline{Q}_t=
	(-1)^{n/2}\frac{n-2}{2}\int_M\braket{\caO_t,\varphi_t}_{\bm{g}_t},
\end{equation*}
where $\caO_t$ is the Fefferman--Graham ambient obstruction tensor of
$\bm{g}_t$~\cite{Fefferman:1985vy,Fefferman:2012vr}.
In particular, if $(M,[g])$ has vanishing obstruction tensor, which is the case if $(M,[g])$ is
conformally Einstein for instance, then $\overline{Q}_t$ stabilizes at $t=0$.
In this case the second derivative of $\overline{Q}_t$ at $t=0$ is of interest. It is given by
\begin{equation}
	\label{eq:SecondDerivativeOfTotalQAndDifferentialObstruction}
	\left.\frac{d^2}{dt^2}\overline{Q}_t\right|_{t=0}
	=(-1)^{n/2}\frac{n-2}{2}\int_M\braket{\caO'_{\bm{g}}\varphi,\varphi}_{\bm{g}},
\end{equation}
where $\caO'_{\bm{g}}\colon\caS_0(2)\to\caS_0(2-n)$ is the linearization at $\bm{g}$ of
the obstruction tensor operator
($\caO'_{\bm{g}}\varphi$ is trace-free because $\bm{g}$ is obstruction-flat).
This shows that it suffices to compute $\caO'_{\bm{g}}$
to derive the second variational formula of the total $Q$-curvature.
The construction of our operators $P_k$ leads to the fact that $P=P_{n/2}$ is equal to $\caO'_{\bm{g}}$
up to a constant factor for obstruction-flat manifolds.
(For $n=4$ and $6$, since an explicit formula of the obstruction tensor is known,
one can directly compute its linearization.
In higher dimensions our result is really new, because there is no such concrete formula for $\mathcal{O}$.)
Thus our GJMS construction adds new knowledge of $\caO'_{\bm{g}}$, which is previously
studied in \cite{Branson:2005,BransonGover:2007,BransonGover:2008}.

If we specialize to the case of conformally Einstein manifolds,
explicit computation is possible thanks to a well-known associated ambient metric.
We will derive a formula of $P_k$ restricted to $\caS^g_\mathrm{TT}(-n/2+2+k)$
with respect to an Einstein representative $g$ with Schouten tensor $\tensor{P}{_i_j}=\lambda\tensor{g}{_i_j}$:
\begin{equation}
	\label{eq:Intro:CharacteristicGJMSOperatorOnTTTensor}
	\begin{split}
		&P_k|_{\caS^g_\mathrm{TT}(-n/2+2+k)}\\
		&=\prod_{m=0}^{k-1}
		\left(\Delta_\mathrm{L}-4(n-1)\lambda-2\left(-\frac{n}{2}+k-2m\right)\left(\frac{n}{2}+k-2m-1\right)\lambda
		\right).
	\end{split}
\end{equation}
Then Theorem \ref{thm:QCurvLocalMaximum} is an immediate consequence.

This article is organized as follows.
Preliminaries about ambient metrics and some preparatory lemmas are included in Section \ref{sec:Preliminaries}.
In Section \ref{sec:GJMS}, our operators $P_k$ are constructed.
One of the characterizations of $P_k$ is that it gives the obstruction to dilation-annihilating
TT-harmonic extension of $\varphi\in\caS_0(-n/2+2+k)$ with respect to
the ambient Lichnerowicz Laplacian $\ambientLichnerowicz$.
In Section \ref{sec:Variations}, we first show that the variation of the normal-form ambient metric
modified by adding a certain tensor in the image of the Killing operator of $\tilde{g}$
is a best possible approximate solution to the harmonic extension problem mentioned above.
Using this fact, we prove that the trace-free part of $\caO'_{\bm{g}}$ equals to $P$ in general.
In Section \ref{sec:ConfEinstein}, we work on conformally Einstein manifolds
and prove Theorem \ref{thm:QCurvLocalMaximum}.

In this article, ``conformal manifolds'' are of arbitrary signature unless otherwise stated.
Index notation is used throughout.
On ambient spaces we use $I$, $J$, $K$, $\dotsc$ as indices, while on the original manifolds
$i$, $j$, $k$, $\dots$ are used.

I wish to thank Kengo Hirachi for a suggestion to take a variational approach to the $Q$-curvature
and for insightful advice, and Robin Graham for discussion on our
formula \eqref{eq:SecondDerivativeOfTotalQForConfEinstein} and letting me know the work \cite{MollerOrsted}.
I also thank Bent \O{}rsted for related discussions.
Moreover, Colin Guillarmou informed me that he has given another proof of
\eqref{eq:SecondDerivativeOfTotalQForConfEinstein} with Sergiu Moroianu and Jean-Marc Schlenker
in a recent work \cite{GuillarmouMoroianuSchlenker}.

\section{Preliminaries}
\label{sec:Preliminaries}

Let $(M,[g])$ be a conformal manifold of dimension $n$ of signature $(p,q)$ with metric cone $\caG$.
With a fixed representative metric $g\in[g]$, $\caG$ is trivialized as
\begin{equation*}
	\caG\cong \bbR_+\times M,\qquad
	t^2g_x\mapsto (t,x).
\end{equation*}
Let $\tilde{\caG}$ be the associated ambient space:
\begin{equation*}
	\tilde{\caG}:=\caG\times\bbR\cong \bbR_+\times M\times\bbR=\set{(t,x,\rho)}.
\end{equation*}
In our index notation, if $\tilde{\caG}$ is trivialized as above, we use the indices $0$ and $\infty$
for the $t$- and $\rho$-components, respectively.

The space $\caG$ carries a natural $\bbR_+$-bundle structure.
The dilation $\delta_s$, $s\in\bbR^\times$, is by definition the action of $s^2\in\bbR_+$,
and the infinitesimal dilation field is denoted by $T$.
The spaces of the densities, weighted 1-forms, and weighted covariant symmetric 2-tensors
(all of weight $w$) are denoted by $\caE(w)$, $\caT(w)$, and $\caS(w)$.
By the metric cone $\caG$, these spaces are realized as follows:
\begin{equation}
	\label{eq:DensitiesAsAmbientTensors}
	\begin{split}
		\caE(w)&=\set{f\in C^\infty(\caG,\bbR)|Tf=wf},\\
		\caT(w)&=\set{\tau\in C^\infty(\caG,T^*\caG)|T\contraction\tau=0,\quad\caL_T\tau=w\tau},\\
		\caS(w)&=\set{\sigma\in C^\infty(\caG,\Sym^2T^*\caG)|T\contraction\sigma=0,\quad\caL_T\sigma=w\sigma}.
	\end{split}
\end{equation}
The $\bbR_+$-action extends to $\tilde{\caG}=\caG\times\bbR$ and so does $T$.
In terms of the extended $T$, we define as follows:
\begin{align*}
	\tilde{\caE}(w)&:=\set{\tilde{f}\in C^\infty(\tilde{\caG},\bbR)|T\tilde{f}=w\tilde{f}},\\
	\tilde{\caT}(w)
	&:=\set{\tilde{\tau}\in C^\infty(\tilde{\caG},T^*\tilde{\caG})|\caL_T\tilde{\tau}=w\tilde{\tau}},\\
	\tilde{\caS}(w)
	&:=\set{\tilde{\sigma}\in C^\infty(\tilde{\caG},\Sym^2T^*\tilde{\caG})|\caL_T\tilde{\sigma}=w\tilde{\sigma}}.
\end{align*}
When $\tilde{\sigma}\in\tilde{\caS}(w)$ satisfies $(T\contraction\tilde{\sigma})|_{T\caG}=0$,
then $\tilde{\sigma}|_{T\caG}$ makes sense as a section in $\caS(w)$ via the identification
\eqref{eq:DensitiesAsAmbientTensors}.
We use the notation $\tilde{\sigma}|_{TM}$ to express this weighted tensor.

Let $\tilde{g}$ be a pre-ambient metric.
This means that $\tilde{g}\in\tilde{\caS}(2)$ is a homogeneous pseudo-Riemannian metric of signature $(p+1,q+1)$
defined on a dilation-invariant open neighborhood of $\caG$ in $\tilde{\caG}$
such that its pullback to $\caG$ is equal to $\bm{g}\in\caS(2)$.
In the sequel we only work asymptotically near $\caG$, so we may assume that all pre-ambient metrics are
defined on the whole $\tilde{\caG}$.
We next introduce the \emph{straightness} condition:
\begin{equation}
	\label{eq:Straightness}
	\tilde{\nabla}T=\id.
\end{equation}
If this is the case, the differential of the canonical defining function
$r=\abs{T}_{\tilde{g}}^2$ of $\caG$ is
\begin{equation}
	\label{eq:Straightness2}
	dr=2T\contraction\tilde{g}.
\end{equation}
Recall that it follows immediately from \eqref{eq:Straightness} that
\begin{equation}
	\label{eq:TAmbientCurvature}
	\tensor{T}{^I}\tensor{\tilde{R}}{_I_J_K_L}=0,\quad
	\text{and hence}\quad\tensor{T}{^I}\tensor{\widetilde{\Ric}}{_I_J}=0.
\end{equation}
The Fefferman--Graham Theorem states that there is a straight pre-ambient metric $\tilde{g}$ with
\begin{equation*}
	\widetilde{\Ric}=
	\begin{cases}
		O(r^\infty)&\text{if $n$ is odd},\\
		O(r^{n/2-1})&\text{if $n$ is even}.
	\end{cases}
\end{equation*}
In this article, such a metric $\tilde{g}$ is called an \emph{ambient metric}.
When $n$ is odd, ambient metrics are unique modulo $O(r^\infty)$ and the action of
dilation-invariant diffeomorphisms on $\tilde{\caG}$ leaving points on $\caG$ fixed
(such diffeomorphisms are called \emph{ambient-equivalence maps} in the sequel).
If is $n$ even, the situation is subtle.
For a 1-form $\tilde{\tau}\in\tilde{\caT}(w)$, we define
\begin{equation*}
	\begin{split}
		\tilde{\tau}=O^-(r^m)&\Longleftrightarrow
		\tilde{\tau}=O(r^{m-1})\quad\text{and}\quad\text{$(r^{1-m}\tilde{\tau})|_{T\caG}$ vanishes}\\
		&\Longleftrightarrow
		\tilde{\tau}=O(r^m)\mod r^{m-1}T\contraction\tilde{g}.
	\end{split}
\end{equation*}
We say that $\tilde{\sigma}\in\tilde{\caS}(w)$ is $O^+(r^m)$ if
\begin{enumerate}
	\item $\tilde{\sigma}=O(r^m)$;
	\item $T\contraction\tilde{\sigma}=O^-(r^{m+1})$ and hence
		$(r^{-m}\tilde{\sigma})|_{TM}$ makes sense; and
	\item $(r^{-m}\tilde{\sigma})|_{TM}\in\caS(w-2m)$ is trace-free with respect to $\bm{g}$.
\end{enumerate}
Then, ambient metrics are unique modulo $O^+(r^{n/2})$ and the action of ambient-equivalence maps.
By \cite[Equation (3.13)]{Fefferman:2012vr}, the condition $\widetilde{\Ric}=O(r^{n/2-1})$ for
ambient metrics actually forces
\begin{equation*}
	\widetilde{\Ric}=O^+(r^{n/2-1}).
\end{equation*}

Let $g\in[g]$ and consider the induced trivialization $\tilde{\caG}\cong\bbR_+\times M\times\bbR$.
If a straight pre-ambient metric $\tilde{g}$ is near $\caG$ of the form
\begin{equation}
	\label{eq:NormalForm}
	\tilde{g}=2\rho\,dt^2+2\rho\,dt\,d\rho+t^2g_\rho,
\end{equation}
where $g_\rho$ is a 1-parameter family of metrics on $M$ with $g_0=g$,
then $\tilde{g}$ is said to be in \emph{normal form relative to $g$}.
For any straight pre-ambient metric $\tilde{g}$ and a choice of $g\in[g]$,
it is known~\cite[Proposition 2.8]{Fefferman:2012vr} that
there exists an ambient-equivalence map $\Phi$ such that $\Phi^*\tilde{g}$ is in normal form relative to $g$.

\begin{lem}
	\label{lem:ActionOfNablaT}
	Let $\tilde{g}$ be a straight pre-ambient metric. For $\tilde{\tau}\in\tilde{\caT}(w)$ and
	$\tilde{\sigma}\in\tilde{\caS}(w)$,
	\begin{equation*}
		\tilde{\nabla}_T\tilde{\tau}=(w-1)\tilde{\tau},\qquad
		\tilde{\nabla}_T\tilde{\sigma}=(w-2)\tilde{\sigma}.
	\end{equation*}
\end{lem}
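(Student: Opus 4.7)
The plan is to invoke the standard identity expressing the Lie derivative along a vector field in terms of the covariant derivative, and then exploit the straightness hypothesis $\tilde{\nabla}T=\id$ to simplify the contraction terms.

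For a 1-form $\tilde{\tau}\in\tilde{\caT}(w)$, I would write
\[
	(\caL_T\tilde{\tau})_I
	=\tensor{T}{^J}\tilde{\nabla}_J\tilde{\tau}_I+\tilde{\tau}_J\tilde{\nabla}_I\tensor{T}{^J},
\]
which holds for any torsion-free connection. The straightness condition \eqref{eq:Straightness} gives $\tilde{\nabla}_I\tensor{T}{^J}=\tensor{\delta}{_I^J}$, so the second term collapses to $\tilde{\tau}_I$. The homogeneity assumption $\caL_T\tilde{\tau}=w\tilde{\tau}$ then yields $\tilde{\nabla}_T\tilde{\tau}=(w-1)\tilde{\tau}$ directly.

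For a symmetric 2-tensor $\tilde{\sigma}\in\tilde{\caS}(w)$, the same approach works, only now the Lie derivative formula contributes two connection-of-$T$ terms, one for each index:
\[
	(\caL_T\tilde{\sigma})_{IJ}
	=\tensor{T}{^K}\tilde{\nabla}_K\tilde{\sigma}_{IJ}
	+\tilde{\sigma}_{KJ}\tilde{\nabla}_I\tensor{T}{^K}
	+\tilde{\sigma}_{IK}\tilde{\nabla}_J\tensor{T}{^K}.
\]
Again applying $\tilde{\nabla}_I\tensor{T}{^J}=\tensor{\delta}{_I^J}$ reduces the last two terms to $2\tilde{\sigma}_{IJ}$, and the homogeneity condition $\caL_T\tilde{\sigma}=w\tilde{\sigma}$ gives $\tilde{\nabla}_T\tilde{\sigma}=(w-2)\tilde{\sigma}$.

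There is no real obstacle here; the lemma is essentially a direct bookkeeping exercise that isolates how many copies of the identity $\tilde{\nabla}T=\id$ appear depending on tensor valence. The only thing worth being careful about is the sign conventions for $\caL_T$ on covariant tensors (the $+\tilde{\tau}_J\tilde{\nabla}_I T^J$ rather than $-$), but once that is fixed, the two formulas follow in a single line each.
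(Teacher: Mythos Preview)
Your proof is correct and follows essentially the same approach as the paper's: both exploit the torsion-free identity relating $\caL_T$ and $\tilde{\nabla}_T$ together with the straightness condition $\tilde{\nabla}T=\id$. The only cosmetic difference is that the paper carries out the computation in coordinate-free form (evaluating $(\tilde{\nabla}_T\tilde{\tau})(\tilde{\xi})$ on a test vector and rewriting $\tilde{\nabla}_T\tilde{\xi}=[T,\tilde{\xi}]+\tilde{\nabla}_{\tilde{\xi}}T$), whereas you quote the index formula for $\caL_T$ directly; the content is the same.
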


\begin{proof}
	Let $\tilde{\xi}\in\frX(\tilde{\caG})$. Then, since the Levi-Civita connection is torsion-free,
	\begin{equation*}
		\begin{split}
			(\tilde{\nabla}_T\tilde{\tau})(\tilde{\xi})
			&=T(\tilde{\tau}(\tilde{\xi}))-\tilde{\tau}(\tilde{\nabla}_T\tilde{\xi})
			=T(\tilde{\tau}(\tilde{\xi}))-\tilde{\tau}([T,\tilde{\xi}]+\tilde{\nabla}_{\tilde{\xi}}T)\\
			&=T(\tilde{\tau}(\tilde{\xi}))
			-\tilde{\tau}(\caL_T\tilde{\xi})-\tilde{\tau}(\tilde{\nabla}_{\tilde{\xi}}T)
			=(\caL_T\tilde{\tau})(\tilde{\xi})-\tilde{\tau}(\tilde{\xi})
			=(w-1)\tilde{\tau}(\tilde{\xi}).
		\end{split}
	\end{equation*}
	The second equality is proved similarly.
\end{proof}

Now let $\tilde{g}$ be a fixed ambient metric.
Let $\tilde{\caS}_0(w)$ be the subspace of formally trace-free tensors of $\tilde{\caS}(w)$,
and $\tilde{\caS}_\mathrm{TT}(w)$ the subspace of formally TT-tensors.
Moreover, we define as follows:
\begin{gather*}
	\tilde{\caS}^X(w):=
	\set{\tilde{\sigma}\in\tilde{\caS}(w)|T\contraction\tilde{\sigma}=O(r^\infty)},\\
	\tilde{\caS}_0^X(w):=\tilde{\caS}_0(w)\cap\tilde{\caS}^X(w),\qquad
	\tilde{\caS}_\mathrm{TT}^X(w):=\tilde{\caS}_\mathrm{TT}(w)\cap\tilde{\caS}^X(w).
\end{gather*}
If $n$ is odd, these spaces are invariant under $O(r^\infty)$-modifications of $\tilde{g}$.
If $n$ is even, we need some technically-defined tensor spaces.
For $2-n\le w\le 2$, we set
\begin{equation*}
	\tilde{\caS}_\mathrm{aTT}(w)
	:=\set{\tilde{\sigma}\in\tilde{\caS}(w)|
	\tr_{\tilde{g}}\tilde{\sigma}=O(r^{\ceil{\frac{n-2+w}{2}}}),\quad
	\divergence_{\tilde{g}}\tilde{\sigma}=O^-(r^{\ceil{\frac{n-2+w}{2}}})}
\end{equation*}
(``aTT'' is for ``approximately TT'') and
\begin{equation*}
	\tilde{\caS}_\mathrm{aTT}^X(w)
	:=\set{\tilde{\sigma}\in\tilde{\caS}_\mathrm{aTT}(w)|T\contraction\tilde{\sigma}
	=O^-(r^{\ceil{\frac{n-2+w}{2}}+1})},
\end{equation*}
where
$\divergence_{\tilde{g}}$ is the divergence operator
$\tensor{(\divergence_{\tilde{g}}\tilde{\sigma})}{_I}=-\tensor{\tilde{\nabla}}{^J}\tensor{\tilde{\sigma}}{_I_J}$,
and $\ceil{x}$ is the smallest integer not less than $x$.
Then $\tilde{\caS}_\mathrm{aTT}^X(w)$ does not depend on the $O^+(r^{n/2})$-ambiguity of $\tilde{g}$.
To check this, let $\tilde{g}'=\tilde{g}+A$ be another ambient metric with $A=O^+(r^{n/2})$.
Then $T\contraction A=O^-(r^{n/2+1})$.
Since $\tr_{\tilde{g}'}\tilde{\sigma}=\tr_{\tilde{g}}\tilde{\sigma}+O(r^{n/2})$ for any $\tilde{\sigma}$,
the trace condition is not affected.
The Christoffel symbol of $\tilde{g}'$ is given by
\begin{equation*}
	\tensor{(\tilde{\Gamma}')}{^K_I_J}
	=\tensor{\tilde{\Gamma}}{^K_I_J}-\frac{1}{2}\tensor{(\tilde{g}'^{-1})}{^K^L}\tensor{(DA)}{_L_I_J}
	=\tensor{\tilde{\Gamma}}{^K_I_J}-\frac{1}{2}\tensor{(DA)}{^K_I_J}+O(r^{n/2}),
\end{equation*}
where
\begin{equation*}
	\tensor{(DA)}{_K_I_J}
	=\tensor{\tilde{\nabla}}{_K}\tensor{A}{_I_J}-\tensor{\tilde{\nabla}}{_I}\tensor{A}{_K_J}
	-\tensor{\tilde{\nabla}}{_J}\tensor{A}{_K_I}.
\end{equation*}
Hence
\begin{equation*}
	\tensor{(\delta_{\tilde{g}'}\tilde{\sigma})}{_I}
	=\tensor{(\delta_{\tilde{g}}\tilde{\sigma})}{_I}
	+\frac{1}{2}\tensor{(DA)}{^J^K_I}\tensor{\tilde{\sigma}}{_J_K}
	+\frac{1}{2}\tensor{(DA)}{^J^K_K}\tensor{\tilde{\sigma}}{_I_J}+O(r^{n/2}).
\end{equation*}
Let $A=r^{n/2}\overline{A}$. Then
\begin{equation*}
	\tensor{(DA)}{_K_I_J}
	=nr^{n/2-1}(\tensor{T}{_K}\tensor{\overline{A}}{_I_J}
	-\tensor{T}{_I}\tensor{\overline{A}}{_K_J}
	-\tensor{T}{_J}\tensor{\overline{A}}{_K_I})+O(r^{n/2})
\end{equation*}
and, because $T\contraction\overline{A}=O^-(r)$,
\begin{equation*}
	\tensor{(DA)}{_K_I^I}
	=nr^{n/2-1}\tensor{T}{_K}\tensor{\overline{A}}{_I^I}+O^-(r^{n/2}).
\end{equation*}
Therefore, if $\tilde{\sigma}\in\tilde{\caS}^X_\mathrm{aTT}(w)$,
$\divergence_{\tilde{g}'}\tilde{\sigma}=\divergence_{\tilde{g}}\tilde{\sigma}+O^-(r^{n/2})
=O^-(r^{\ceil{(n-2+w)/2}})$.

\begin{lem}
	\label{lem:AmbientLift}
	Let $\tilde{g}$ be an ambient metric and $\varphi\in\caS_0(-n/2+2+k)$, where $k\in\bbZ_+$.
	If $n$ is odd, then there exists $\tilde{\sigma}\in\tilde{\caS}_\mathrm{TT}^X(-n/2+2+k)$ such that
	$\tilde{\sigma}|_{TM}=\varphi$.
	If $n$ is even, there exists $\tilde{\sigma}\in\tilde{\caS}_\mathrm{aTT}^X(-n/2+2+k)$ such that
	$\tilde{\sigma}|_{TM}=\varphi$ as long as $k\le n/2$.
	In both cases, the restriction $\tilde{\varphi}=\tilde{\sigma}|_\caG$ is uniquely determined.
\end{lem}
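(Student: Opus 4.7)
The plan is to fix a representative $g \in [g]$, put $\tilde{g}$ in normal form \eqref{eq:NormalForm} relative to $g$ by the result of Fefferman--Graham cited above, and construct $\tilde{\sigma}$ as a formal $\rho$-power series in the induced trivialization $\tilde{\caG} \cong \bbR_+ \times M \times \bbR$. Splitting tensor indices into $\{0, i, \infty\}$, the homogeneity condition $\caL_T \tilde{\sigma} = w\tilde{\sigma}$ pins the $t$-dependence of every component $\tilde{\sigma}_{IJ}$, so the unknown is a finite collection of tensor fields on $M \times \bbR_\rho$. The prescribed initial datum is $\tilde{\sigma}_{ij}|_{\rho=0} = \varphi_{ij}$; I need to determine the other components together with all higher-order Taylor coefficients in $\rho$.

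I would build $\tilde{\sigma}$ iteratively in powers of $\rho$, at each order imposing the three defining conditions in sequence. First, $T \contraction \tilde{\sigma} = O^-(r^{M+1})$ (with $M = \infty$ for $n$ odd and $M = \ceil{(n-2+w)/2}$ for $n$ even) can be arranged to the required order by adjusting the $\tilde{\sigma}_{0J}$ components using the straightness identity $dr = 2T \contraction \tilde{g}$ from \eqref{eq:Straightness2}, which is purely algebraic. Formal trace-freeness then determines $\tilde{\sigma}_{\infty\infty}$ algebraically from the lower data at each order, and formal divergence-freeness yields a recursion for the next Taylor coefficient of $\tilde{\sigma}_{\infty i}$ and $\tilde{\sigma}_{ij}$. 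This last step produces an indicial equation of the form $c(m, n, w)\cdot\tilde{\sigma}^{(m)} = F_m$, where the coefficient $c(m, n, w)$ arises by combining the weight action $\tilde{\nabla}_T \tilde{\sigma} = (w-2)\tilde{\sigma}$ from Lemma \ref{lem:ActionOfNablaT} with the Euler factor produced by differentiating $\rho^m$, and $F_m$ is known from earlier orders. A direct computation should show $c(m, n, w) \neq 0$ for all $m \ge 0$ when $n$ is odd, and for $0 \le m < \ceil{(n-2+w)/2}$ when $n$ is even and $w = -n/2 + 2 + k$ with $k \le n/2$; this threshold is exactly where the paper truncates the aTT conditions, which explains the restriction $k \le n/2$.

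For the uniqueness of $\tilde{\varphi} = \tilde{\sigma}|_\caG$, I would apply the same analysis to a difference $A = \tilde{\sigma}_1 - \tilde{\sigma}_2$ of two admissible extensions with $A|_{TM} = 0$: the $T$-contraction condition at $\rho = 0$ forces $A_{0J}|_\caG = 0$, trace-freeness then forces $A_{\infty\infty}|_\caG = 0$, and divergence-freeness at $\rho = 0$ forces $A_{\infty i}|_\caG = 0$. The main obstacle will be the careful bookkeeping of how the three conditions interact through the iteration: verifying that the $T$-contraction correction at order $m$ does not destroy the trace and divergence constraints previously arranged, and confirming that the vanishing of the indicial coefficient $c(m, n, w)$ happens exactly at the threshold corresponding to $k = n/2$ in the even case, so that the construction runs to all orders in the odd case and to the stated finite order in the even case.
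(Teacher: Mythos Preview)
Your coordinate-based strategy is viable in principle, but the bookkeeping you sketch is off in two places, and it differs from the paper's organization. The paper works intrinsically: it starts from an arbitrary $\tilde{\sigma}_{(0)}\in\tilde{\caS}_0^X(-n/2+2+k)$ extending $\varphi$ (trace-free and $T$-annihilated to all orders, which is cheap to arrange), notes that $T\contraction\divergence_{\tilde g}\tilde{\sigma}$ then vanishes automatically, and corrects \emph{only} for the divergence by adding
\[
2r^{m-1}T_{(I}V_{J)}+r^{m-1}\tilde f\,T_IT_J-r^mW_{IJ},
\]
with $W$ chosen so that trace-freeness and $T$-annihilation are preserved at every step. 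Computing the divergence of this correction produces two separate indicial factors, $n/2+2+k-2m$ (governing $V$) and $n/2+1+k-2m$ (governing $\tilde f$); these are never integers simultaneously, never vanish for $n$ odd, and first vanish exactly at the aTT cutoff for $n$ even. The tangential block of $W$ is free at each step, which is why the lemma asserts uniqueness only of $\tilde{\sigma}|_\caG$ and not of the full jet.

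The concrete slips in your scheme are these. First, in normal form one has $\tilde g^{\infty\infty}=-2\rho t^{-2}$, so the trace $\tr_{\tilde g}\tilde{\sigma}=2t^{-1}\tilde{\sigma}_{0\infty}-2\rho t^{-2}\tilde{\sigma}_{\infty\infty}+t^{-2}(g_\rho^{-1})^{ij}\tilde{\sigma}_{ij}$ does \emph{not} involve $\tilde{\sigma}_{\infty\infty}$ at $\rho=0$. Hence your uniqueness step ``trace-freeness then forces $A_{\infty\infty}|_\caG=0$'' fails; in the paper it is the $\infty$-component of the divergence condition (with nonzero factor $n+2k-2$) that fixes this datum. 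Second, the divergence equation has only $n+2$ components and cannot yield a recursion for the Taylor coefficients of $\tilde{\sigma}_{ij}$: those are genuinely free (they live in the paper's $W$). If you reorganize so that $\tilde{\sigma}_{ij}$ at orders $\ge 1$ is chosen arbitrarily, divergence is used to determine $\tilde{\sigma}_{\infty i}$ and $\tilde{\sigma}_{\infty\infty}$ (tracking two indicial factors rather than a single $c(m,n,w)$), and trace and $T$-annihilation are maintained by the form of the correction, your argument will close.
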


\begin{proof}
	To prove the existence part,
	take any $\tilde{\sigma}_{(0)}\in\tilde{\caS}^X_0(-n/2+2+k)$ for which $\tilde{\sigma}_{(0)}|_{TM}=\varphi$.
	We shall inductively construct $\tilde{\sigma}_{(m)}\in\tilde{\caS}^X_0(-n/2+2+k)$
	for nonnegative integers $m$ such that
	\begin{equation*}
		\tilde{\sigma}_{(m)}=\tilde{\sigma}_{(m-1)}+O(r^{m-1}),\qquad
		\divergence_{\tilde{g}}\tilde{\sigma}_{(m)}=O(r^m).
	\end{equation*}
	Suppose we have $\tilde{\sigma}_{(m-1)}\in\tilde{\caS}^X_0(-n/2+2+k)$ with
	$\divergence_{\tilde{g}}\tilde{\sigma}_{(m-1)}=O(r^{m-1})$.
	If $\tilde{\sigma}_{(m)}\in\tilde{\caS}^X_0(-n/2+2+k)$,
	then $T\contraction\divergence_{\tilde{g}}\tilde{\sigma}_{(m)}=0$ is automatically guaranteed:
	\begin{equation*}
		\begin{split}
			\tensor{T}{^I}\tensor{\tilde{\nabla}}{^J}\tensor{(\tilde{\sigma}_{(m-1)})}{_I_J}
			&=\tensor{\tilde{\nabla}}{^J}(\tensor{T}{^I}\tensor{(\tilde{\sigma}_{(m-1)})}{_I_J})
			-(\tensor{\tilde{\nabla}}{^J}\tensor{T}{^I})\tensor{(\tilde{\sigma}_{(m-1)})}{_I_J}\\
			&=0+\tensor{\tilde{g}}{^I^J}\tensor{(\tilde{\sigma}_{(m-1)})}{_I_J}=0.
		\end{split}
	\end{equation*}
	We seek for $\tilde{\sigma}_{(m)}$ assuming that it is of the form
	\begin{equation}
		\label{eq:TTExtension_FormOfInduction}
		\tensor{(\tilde{\sigma}_{(m)})}{_I_J}
		=\tensor{(\tilde{\sigma}_{(m-1)})}{_I_J}+2r^{m-1}\tensor{T}{_(_I}\tensor{V}{_J_)}
		+r^{m-1}\tilde{f}\tensor{T}{_I}\tensor{T}{_J}-r^m\tensor{W}{_I_J},
	\end{equation}
	where $V\in\tilde{\caT}(-n/2+2+k-2m)$ satisfies $\tensor{T}{^I}\tensor{V}{_I}=0$,
	$\tilde{f}\in\tilde{\caE}(-n/2+k-2m)$, and $W\in\tilde{\caS}^X(-n/2+2+k-2m)$ is such that
	the whole expression \eqref{eq:TTExtension_FormOfInduction} is trace-free and vanishes if contracted with $T$
	(hence $\tr_{\tilde{g}}W=\tilde{f}$, $\tensor{T}{^J}\tensor{W}{_I_J}=\tensor{V}{_I}+\tilde{f}\tensor{T}{_I}$).
	Minus of the divergences of the additional three terms
	on the right-hand side of \eqref{eq:TTExtension_FormOfInduction} are
	\begin{align*}
		\tensor{\tilde{\nabla}}{^J}(2r^{m-1}\tensor{T}{_(_I}\tensor{V}{_J_)})
		&=r^{m-1}\cdot((n/2+2+k)\tensor{V}{_I}+\tensor{T}{_I}\tensor{\tilde{\nabla}}{^J}\tensor{V}{_J})+O(r^m),\\
		\tensor{\tilde{\nabla}}{^J}(r^{m-1}\tilde{f}\tensor{T}{_I}\tensor{T}{_J})
		&=r^{m-1}\cdot(n/2+1+k)\tilde{f}\tensor{T}{_I}+O(r^m),\\
		\tensor{\tilde{\nabla}}{^J}(-r^m\tensor{W}{_I_J})
		&=r^{m-1}\cdot(-2m)(\tensor{V}{_I}+\tilde{f}\tensor{T}{_I})+O(r^m).
	\end{align*}
	Therefore, we first put $V=(n/2+2+k-2m)^{-1}r^{-m+1}\divergence_{\tilde{g}}\tilde{\sigma}_{(m-1)}$,
	and set $\tilde{f}=-(n/2+1+k-2m)^{-1}\tensor{\tilde{\nabla}}{^J}\tensor{V}{_J}$
	so that the $O(r^{m-1})$-term of the divergence of \eqref{eq:TTExtension_FormOfInduction} vanishes.
	This is possible for all $m$ if $n$ is odd,
	and until $m=\lfloor n/2+k\rfloor$ if $n$ is even.
	Applying Borel's Lemma, the proof of the existence for $n$ odd is complete.
	When $n$ is even, we get $\tilde{\sigma}=\tilde{\sigma}_{(\lfloor(n/2+k)/2\rfloor)}$.
	Furthermore, if $n/2+1+k$ is an even number, then $\divergence_{\tilde{g}}\tilde{\sigma}$
	can be made $O^-(r^{(n/2+1+k)/2})$.
	Anyway, $\divergence_{\tilde{g}}\tilde{\sigma}$ finally becomes $O^-(r^{\lceil(n/2+k)/2\rceil})$,
	and the existence for $n$ even is proved.

	Let us once again take $\tilde{\sigma}_{(0)}$ as we did in the beginning of this proof.
	If $\tilde{\sigma}$ is as in the statement, then since $(T\contraction\tilde{\sigma})|_\caG=0$ and
	$\tilde{\sigma}|_{TM}=\varphi$, $\tilde{\sigma}$ must be written as
	\begin{equation*}
		\tilde{\sigma}=\tilde{\sigma}_{(0)}+2\tensor{T}{_(_I}\tensor{V}{_J_)}-r\tensor{W}{_I_J},
	\end{equation*}
	where $\tensor{T}{^I}\tensor{V}{_I}=O(r)$.
	Moreover, in order $T\contraction\tilde{\sigma}=O(r^2)$ to be satisfied,
	$\tensor{T}{^J}\tensor{W}{_I_J}$ should be
	$\tensor{V}{_I}+r^{-1}\tensor{T}{_I}\tensor{T}{^J}\tensor{V}{_J}+O(r)$.
	Then
	\begin{equation*}
		\tensor{\tilde{\nabla}}{^J}(2\tensor{T}{_(_I}\tensor{V}{_J_)}-r\tensor{W}{_I_J})
		=\left(\frac{n}{2}+k\right)\tensor{V}{_I}
		+\tensor{T}{_I}(\tensor{\tilde{\nabla}}{^J}\tensor{V}{_J}-2r^{-1}\tensor{T}{^J}\tensor{V}{_J})+O(r).
	\end{equation*}
	Therefore, $\tensor{V}{_I}$ mod $O^-(r)$ is determined by the condition
	$\divergence_{\tilde{g}}\tilde{\sigma}=O^-(r)$.
	If we put $\tilde{f}\tensor{T}{_I}$ into $\tensor{V}{_I}$, then the right-hand side will be
	$(n+2k-2)\tilde{f}\tensor{T}{_I}$.
	Thus $\tensor{V}{_I}$ is uniquely determined in order to satisfy $\divergence_{\tilde{g}}\tilde{\sigma}=O(r)$.
\end{proof}

We call $\tilde{\varphi}$ in Lemma \ref{lem:AmbientLift} the \emph{ambient lift} of $\varphi\in\caS(-n/2+2+k)$.

\section{A GJMS construction for trace-free symmetric 2-tensors}
\label{sec:GJMS}

Let $(M,[g])$ be a conformal manifold of dimension $n\ge 3$ and $\tilde{g}$ an ambient metric.
We shall play with the following three operators:
\begin{align*}
	x &\colon \tilde{\caS}(w)\to\tilde{\caS}(w+2), & \tilde{\sigma}&\mapsto\frac{1}{4}r\tilde{\sigma},\\
	y &\colon \tilde{\caS}(w)\to\tilde{\caS}(w-2), &
	\tilde{\sigma}&\mapsto\ambientLichnerowicz\tilde{\sigma},\\
	h &\colon \tilde{\caS}(w)\to\tilde{\caS}(w), &
	\tilde{\sigma}&\mapsto\left(\tilde{\nabla}_T+\frac{n+2}{2}\right)\tilde{\sigma}
	=\left(w+\frac{n}{2}-1\right)\tilde{\sigma}.
\end{align*}
Just as in the case of the classical GJMS construction, one can verify the following.

\begin{prop}
	The operators $x$, $y$, $h$ enjoy the $\mathfrak{sl}(2)$ commutation relations:
	\begin{equation*}
		[h,x]=2x,\qquad
		[h,y]=-2y,\qquad
		[x,y]=h.
	\end{equation*}
\end{prop}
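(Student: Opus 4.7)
The plan is to verify the three relations separately. The first two, $[h,x]=2x$ and $[h,y]=-2y$, amount to weight-counting; the substantive identity is $[x,y]=h$, which requires a short computation modeled on the scalar GJMS case.

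Since $r\in\tilde{\caE}(2)$, multiplication by $r$ maps $\tilde{\caS}(w)$ to $\tilde{\caS}(w+2)$, so $x$ raises weight by $2$. For $y$, one has to check that each of the three constituents of $\ambientLichnerowicz$---the rough Laplacian and the algebraic curvature terms $\widetilde{\Ric}^\circ$ and $\mathring{\tilde{R}}$---shifts weight by $-2$. This follows because $\caL_T$ commutes with $\tilde{\nabla}$ (as $T$ is a homothety, $\caL_T\tilde{g}=2\tilde{g}$, so Christoffel symbols are $\caL_T$-invariant), while each term contains exactly one inverse-metric contraction. Since $h$ acts on $\tilde{\caS}(w)$ by the scalar $w+n/2-1$, the shifts $w\mapsto w+2$ for $x$ and $w\mapsto w-2$ for $y$ immediately give $[h,x]=2x$ and $[h,y]=-2y$.

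For $[x,y]$, the zeroth-order curvature terms $2\widetilde{\Ric}^\circ-2\mathring{\tilde{R}}$ commute with multiplication by the scalar function $r$, so the commutator reduces to $\tfrac{1}{4}[r,\tilde{\nabla}^*\tilde{\nabla}]$ on $\tilde{\caS}(w)$. The Leibniz rule gives
\begin{equation*}
\tilde{\nabla}^I\tilde{\nabla}_I(r\tilde{\sigma})-r\,\tilde{\nabla}^I\tilde{\nabla}_I\tilde{\sigma}
=(\tilde{\nabla}^I\tilde{\nabla}_Ir)\tilde{\sigma}+2(\tilde{\nabla}^Ir)\tilde{\nabla}_I\tilde{\sigma}.
\end{equation*}
From \eqref{eq:Straightness2} and \eqref{eq:Straightness} one has $\tilde{\nabla}_Ir=2T_I$ and $\tilde{\nabla}^I\tilde{\nabla}_Ir=2\tilde{g}^{IJ}\tilde{g}_{IJ}=2(n+2)$, while Lemma \ref{lem:ActionOfNablaT} gives $2(\tilde{\nabla}^Ir)\tilde{\nabla}_I\tilde{\sigma}=4\tilde{\nabla}_T\tilde{\sigma}=4(w-2)\tilde{\sigma}$. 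Summing the two contributions and taking into account the sign $\tilde{\Delta}=-\tilde{\nabla}^I\tilde{\nabla}_I$ in $\ambientLichnerowicz$ produces $[x,y]\tilde{\sigma}=(w+n/2-1)\tilde{\sigma}=h\tilde{\sigma}$.

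The calculation has no deep obstacle; the only points requiring care are to confirm that the algebraic curvature terms drop out of $[x,y]$ because they commute with multiplication by $r$, and to keep the sign convention on the rough Laplacian straight. Everything else is a direct application of Lemma \ref{lem:ActionOfNablaT} together with the straightness of $\tilde{g}$.
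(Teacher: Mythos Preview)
Your argument is correct. The paper itself leaves the proof to the reader, so there is no alternative approach to compare; what you have written is exactly the expected verification, and the key points---that the curvature terms commute with multiplication by $r$, and that $\tilde{\nabla}^I\tilde{\nabla}_I r=2(n+2)$ together with Lemma~\ref{lem:ActionOfNablaT} supply the remaining ingredients---are handled correctly.
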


The proof is left to the reader.
Consequently we have the following identities:
\begin{align}
	\label{eq:SL2Commutations1}
	[y^m,x]&=-my^{m-1}(h-m+1),\\
	\label{eq:SL2Commutations2}
	[x^m,y]&=mx^{m-1}(h+m-1),\\
	\label{eq:SL2Commutations3}
	y^{m-1}x^{m-1}&=(-1)^{m-1}(m-1)!h(h+1)\cdots(h+m-2)+xZ_m,
\end{align}
where $Z_m$ is some polynomial in $x$, $y$, $h$.

We are going to verify that $x$, $y$, and $h$ preserve the subspaces $\tilde{\caS}^X_\mathrm{TT}(w)$ when
$n$ is odd and $\tilde{\caS}^X_\mathrm{aTT}(w)$ when $n$ even.
For this we need two lemmas.

\begin{lem}
	For $\tilde{f}\in\tilde{\caE}(w)$, $\tilde{\tau}\in\tilde{\caT}(w)$,
	\begin{gather}
		\label{eq:LaplacianOnFunctions}
		\tilde{f}=O(r^m)\Longrightarrow\tilde{\Delta}\tilde{f}=O(r^{m-1}),\\
		\label{eq:LaplacianOnForms}
		\tilde{\tau}=O^-(r^m)\Longrightarrow\tilde{\Delta}\tilde{\tau}=O^-(r^{m-1}).
	\end{gather}
	In \eqref{eq:LaplacianOnForms}, we may also replace $\tilde{\Delta}$ with the Hodge Laplacian $\ambientHodge$.
\end{lem}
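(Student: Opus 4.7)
The plan is to prove (i) by a direct product-rule computation using the straightness identities for $r = |T|_{\tilde{g}}^2$, and then to deduce (ii) by splitting $\tilde{\tau}$ into a ``generic'' $O(r^m)$ piece plus a multiple of $T \contraction \tilde{g}$, reducing each part to (i). The substantive content of (ii) is that, while a naive order count would yield only $\tilde{\Delta}\tilde{\tau} = O(r^{m-2})$, the offending $O(r^{m-2})$ terms coming from the $dr$-piece all point along $T_I$ and are therefore reabsorbed into the class $O^-(r^{m-1})$.

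For (i), I would first record that straightness forces $\tilde{\nabla}_I r = 2 T_I$ and $\tilde{\nabla}_I \tilde{\nabla}_J r = 2\tilde{g}_{IJ}$, so in particular $\tilde{\Delta} r$ is a nonzero constant and $|dr|_{\tilde{g}}^2 = 4r$. Writing $\tilde{f} = r^m \tilde{h}$ and expanding
\[
\tilde{\Delta}(r^m \tilde{h}) = (\tilde{\Delta} r^m)\tilde{h} + 2 \tilde{\nabla}^I r^m \cdot \tilde{\nabla}_I \tilde{h} + r^m \tilde{\Delta}\tilde{h}
\]
by the product rule, each term is $O(r^{m-1})$, since $\tilde{\Delta}(r^m) = O(r^{m-1})$ and $\tilde{\nabla} r^m = 2m r^{m-1} T$.

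For (ii), I would use the characterization $O^-(r^m) = \{\tilde{\tau} = O(r^m) \bmod r^{m-1}\, T \contraction \tilde{g}\}$ to decompose $\tilde{\tau} = \tilde{\alpha} + h \cdot (T\contraction\tilde{g})$ with $\tilde{\alpha} = O(r^m)$ and $h = O(r^{m-1})$. The tensorial version of the argument for (i) immediately gives $\tilde{\Delta}\tilde{\alpha} = O(r^{m-1})$. For the $dr$-piece, the two consequences of straightness $\tilde{\nabla}_J T_I = \tilde{g}_{IJ}$ and $\tilde{\nabla}^K \tilde{\nabla}_K T_I = 0$ reduce $\tilde{\Delta}$ of $h T_I$ to
\[
\tilde{\Delta}(h T_I) = (\tilde{\Delta} h) T_I + 2 \tilde{\nabla}_I h.
\]
Applying (i) to $h$ and expanding $\tilde{\nabla}_I h$ directly from $h = 2 r^{m-1}\tilde{f}$, every singular $O(r^{m-2})$ contribution is a multiple of $T_I$ and hence lies in $r^{m-2}\, T \contraction \tilde{g}$, while the residual tangential piece is only $O(r^{m-1})$. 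This is precisely $\tilde{\Delta}(h \cdot T \contraction \tilde{g}) \in O^-(r^{m-1})$.

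For the replacement by $\ambientHodge$, I would invoke the Weitzenbock identity $\ambientHodge = \tilde{\Delta} + \widetilde{\Ric}$ on 1-forms; the relation $T^I \widetilde{\Ric}_{IJ} = 0$ from \eqref{eq:TAmbientCurvature} implies that $\widetilde{\Ric}$ annihilates the $dr$-piece, so $\widetilde{\Ric}\tilde{\tau} = \widetilde{\Ric}\tilde{\alpha} = O(r^m)$, and the Ricci correction cannot worsen the order. The main obstacle is purely bookkeeping: the conceptual point is that the Hessian $\tilde{\nabla}_J T_I$ is exactly $\tilde{g}_{IJ}$, so second derivatives falling on the $dr$-piece reliably land back along $T_I$ rather than escaping into a genuine $O(r^{m-2})$ direction.
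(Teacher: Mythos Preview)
Your proposal is correct and follows essentially the same route as the paper's proof: compute $\tilde{\Delta}$ on $r^m$ via the straightness identities, handle the generic $O(r^m)$ part of $\tilde{\tau}$ by the tensorial analogue of (i), treat the $r^{m-1}\tilde{f}\,T_I$ piece by direct computation showing the $O(r^{m-2})$ residue is proportional to $T_I$, and finish the Hodge case with the Bochner/Weitzenb\"ock identity. One small slip: with the paper's convention $\tilde{\Delta}=\tilde{\nabla}^*\tilde{\nabla}=-\tilde{\nabla}^K\tilde{\nabla}_K$, the identity reads $\tilde{\Delta}(hT_I)=(\tilde{\Delta}h)T_I-2\tilde{\nabla}_I h$, not $+2\tilde{\nabla}_I h$; this has no effect on the order argument.
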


\begin{proof}
	First we compute $\tilde{\Delta}(r^m)$:
	\begin{equation*}
		\tilde{\Delta}(r^m)=-\tensor{\tilde{\nabla}}{^I}\tensor{\tilde{\nabla}}{_I}(r^m)
		=-\tensor{\tilde{\nabla}}{^I}(2mr^{m-1}\tensor{T}{_I})
		=-2m(2m+n)r^{m-1}.
	\end{equation*}
	Hence it is clear that $\tilde{f}=O(r^m)$ implies $\tilde{\Delta}\tilde{f}=O(r^{m-1})$ and that
	$\tilde{\tau}=O(r^m)$ implies $\tilde{\Delta}\tilde{\tau}=O(r^{m-1})$.
	So, to prove \eqref{eq:LaplacianOnForms},
	it remains to show that $\tilde{\Delta}(r^{m-1}\tilde{f}\tensor{T}{_I})$ is $O^-(r^{m-1})$.
	This is checked directly:
	\begin{equation*}
		\tensor{\tilde{\nabla}}{_J}(r^{m-1}\tilde{f}\tensor{T}{_I})
		=2(m-1)r^{m-2}\tilde{f}\tensor{T}{_I}\tensor{T}{_J}+r^{m-1}\tilde{f}\tensor{\tilde{g}}{_I_J}
		+r^{m-1}\tensor{T}{_I}\tensor{\tilde{\nabla}}{_J}\tilde{f}
	\end{equation*}
	and therefore
	\begin{equation*}
		\tilde{\Delta}(r^{m-1}\tilde{f}\tensor{T}{_I})
		=-2(m-1)(2m+n+2w)r^{m-2}\tilde{f}\tensor{T}{_I}+O(r^{m-1}).
	\end{equation*}
	By Bochner's Formula
	$\ambientHodge\tensor{\tilde{\tau}}{_I}=\tilde{\Delta}\tensor{\tilde{\tau}}{_I}
	+\tensor{\widetilde{\Ric}}{_I^J}\tensor{\tilde{\tau}}{_J}$,
	$\ambientHodge\tilde{\tau}=O^-(r^{m-1})$ is clear.
\end{proof}

Let $(D\widetilde{\Ric})^\circ\colon\tilde{\caS}(w)\to\tilde{\caT}(w-4)$ be defined by
\begin{equation*}
	\tensor{((D\widetilde{\Ric})^\circ\tilde{\sigma})}{_I}
	=(\tensor{\tilde{\nabla}}{_I}\tensor{\widetilde{\Ric}}{_J_K}
	-\tensor{\tilde{\nabla}}{_J}\tensor{\widetilde{\Ric}}{_I_K}
	-\tensor{\tilde{\nabla}}{_K}\tensor{\widetilde{\Ric}}{_I_J})\tensor{\tilde{\sigma}}{^J^K}.
\end{equation*}
Then it is known that, on any symmetric 2-tensor,
\begin{equation}
	\label{eq:CommutatorDivLichnerowicz}
	\divergence_{\tilde{g}}\circ\ambientLichnerowicz=\ambientHodge\circ\divergence_{\tilde{g}}
	+(D\widetilde{\Ric})^\circ.
\end{equation}

\begin{lem}
	\label{lem:DRic}
	When $n$ is even and $2-n\le w\le 2$,
	\begin{equation*}
		\tilde{\sigma}\in\tilde{\caS}^X_\mathrm{aTT}(w)\Longrightarrow
		(D\widetilde{\Ric})^\circ\tilde{\sigma}=O^-(r^{n/2-1}).
	\end{equation*}
\end{lem}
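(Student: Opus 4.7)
Since $n$ is even and $\tilde g$ is an ambient metric, the excerpt records $\widetilde{\Ric}=O^+(r^{n/2-1})$, so I may factor $\widetilde{\Ric}_{JK}=r^{n/2-1}\overline{R}_{JK}$ with $\overline{R}$ a smooth symmetric $2$-tensor on $\tilde{\caG}$. Dividing the exact identity $\tensor{T}{^I}\tensor{\widetilde{\Ric}}{_I_J}=0$ of \eqref{eq:TAmbientCurvature} by $r^{n/2-1}$ and using continuity at $r=0$ yields the exact relation $\tensor{T}{^J}\tensor{\overline{R}}{_J_K}=0$ everywhere on $\tilde{\caG}$. This algebraic vanishing is the essential input.

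Next I expand derivatives using $\tilde{\nabla}_I r=2T_I$ from \eqref{eq:Straightness2}:
\[
\tilde{\nabla}_I\widetilde{\Ric}_{JK}=(n-2)\,r^{n/2-2}\,T_I\overline{R}_{JK}+r^{n/2-1}\,\tilde{\nabla}_I\overline{R}_{JK}.
\]
Only the first summand is potentially of the critical order $r^{n/2-2}$, and it carries a $T_I$ in the free slot. Rewriting the defining expression by the symmetry of $\tilde{\sigma}$ as
\[
((D\widetilde{\Ric})^\circ\tilde{\sigma})_I=(\tilde{\nabla}_I\widetilde{\Ric}_{JK}-2\tilde{\nabla}_J\widetilde{\Ric}_{IK})\tilde{\sigma}^{JK}
\]
and substituting the expansion, I obtain
\[
((D\widetilde{\Ric})^\circ\tilde{\sigma})_I=(n-2)\,r^{n/2-2}\bigl[T_I\,\overline{R}_{JK}\tilde{\sigma}^{JK}-2\,\overline{R}_{IK}(T\contraction\tilde{\sigma})^K\bigr]+O(r^{n/2-1}).
\]

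The first bracketed term is a scalar multiple of $T_I$ at order $r^{n/2-2}$ and therefore lies in $O^-(r^{n/2-1})$ by definition. For the second, the aTT assumption together with $w\ge 2-n$ gives $T\contraction\tilde{\sigma}=O^-(r^{\ceil{(n-2+w)/2}+1})\subseteq O^-(r^1)$, so at $r=0$ the $1$-form $T\contraction\tilde{\sigma}$ is a scalar multiple of $T$ (with index lowered); paired with $\overline{R}$ this contracts to zero by $\tensor{\overline{R}}{_I_K}\tensor{T}{^K}=0$, forcing $\overline{R}_{IK}(T\contraction\tilde{\sigma})^K=O(r)$ and making the whole second contribution only $O(r^{n/2-1})$. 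Combining, $((D\widetilde{\Ric})^\circ\tilde{\sigma})_I=O^-(r^{n/2-1})$, as desired. There is no serious obstacle; the only care lies in handling the $O^-$ bookkeeping, and I note for later use that only the weakest part of the aTT hypothesis, namely $T\contraction\tilde{\sigma}=O^-(r^1)$, is exploited — the trace and full divergence conditions play no role here.
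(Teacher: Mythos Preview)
Your proof is correct and follows essentially the same approach as the paper's: factor $\widetilde{\Ric}=r^{n/2-1}\tilde{S}$, expand the derivative via $\tilde{\nabla}_I r=2T_I$, and handle the two contributions by observing that the first is a multiple of $T_I$ (hence $O^-(r^{n/2-1})$) while the second vanishes to one extra order thanks to $T\contraction\tilde{\sigma}=O^-(r)$ combined with $T^K\tilde{S}_{IK}=0$. The only cosmetic difference is that you combine the last two terms of $(D\widetilde{\Ric})^\circ$ via the symmetry of $\tilde{\sigma}$ at the outset and spell out the continuity argument for $T^K\overline{R}_{IK}=0$, both of which the paper leaves implicit.
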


\begin{proof}
	Let $\widetilde{\Ric}=r^{n/2-1}\tilde{S}$. Then
	\begin{equation}
		\label{eq:DerivativeOfAmbientRic}
		\tensor{\tilde{\nabla}}{_I}\tensor{\widetilde{\Ric}}{_J_K}
		=(n-2)r^{n/2-2}\tensor{T}{_I}\tensor{\tilde{S}}{_J_K}+O(r^{n/2-1}).
	\end{equation}
	Therefore
	\begin{equation*}
		(\tensor{\tilde{\nabla}}{_I}\tensor{\widetilde{\Ric}}{_J_K})\tensor{\tilde{\sigma}}{^J^K}
		=(n-2)r^{n/2-2}\braket{\tilde{S},\tilde{\sigma}}_{\tilde{g}}\tensor{T}{_I}+O(r^{n/2-1}).
	\end{equation*}
	On the other hand, since $T\contraction\tilde{\sigma}$ is at least $O^-(r)$,
	we can write $\tensor{T}{^I}\tensor{\tilde{\sigma}}{_I_J}=\tilde{f}\tensor{T}{_J}+O(r)$.
	Hence, by \eqref{eq:DerivativeOfAmbientRic} and \eqref{eq:TAmbientCurvature},
	\begin{equation*}
		(\tensor{\tilde{\nabla}}{_J}\tensor{\widetilde{\Ric}}{_I_K})\tensor{\tilde{\sigma}}{^J^K}
		=(n-2)r^{n/2-2}\tilde{f}\tensor{T}{^K}\tensor{\tilde{S}}{_I_K}+O(r^{n/2-1})
		=O(r^{n/2-1}).
	\end{equation*}
	Consequently, $(D\widetilde{\Ric})^\circ\tilde{\sigma}=O^-(r^{n/2-1})$.
\end{proof}

\begin{prop}
	\label{prop:MappingPropertyOfSL2}
	If $n$ is odd, then for any $w$,
	\begin{gather*}
		x(\tilde{\caS}_\mathrm{TT}^X(w))\subset\tilde{\caS}_\mathrm{TT}^X(w+2),\quad
		y(\tilde{\caS}_\mathrm{TT}^X(w))\subset\tilde{\caS}_\mathrm{TT}^X(w-2),\quad
		h(\tilde{\caS}_\mathrm{TT}^X(w))\subset\tilde{\caS}_\mathrm{TT}^X(w).
	\end{gather*}
	If $n$ is even,
	\begin{alignat*}{2}
		x(\tilde{\caS}_\mathrm{aTT}^X(w))&\subset\tilde{\caS}_\mathrm{aTT}^X(w+2),&\qquad &2-n\le w\le 0,\\
		y(\tilde{\caS}_\mathrm{aTT}^X(w))&\subset\tilde{\caS}_\mathrm{aTT}^X(w-2),& &{-n}\le w\le 2,\\
		h(\tilde{\caS}_\mathrm{aTT}^X(w))&\subset\tilde{\caS}_\mathrm{aTT}^X(w),& &2-n\le w\le 2.
	\end{alignat*}
\end{prop}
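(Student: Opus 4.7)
The plan is to verify each of the three defining conditions of $\tilde{\caS}^X_\mathrm{TT}(w)$, resp.\ $\tilde{\caS}^X_\mathrm{aTT}(w)$---namely those on $\tr_{\tilde{g}}\tilde{\sigma}$, on $\divergence_{\tilde{g}}\tilde{\sigma}$, and on $T\contraction\tilde{\sigma}$---separately for each of $x$, $y$, and $h$. Since $h\tilde{\sigma}=(w+\tfrac{n}{2}-1)\tilde{\sigma}$ is a scalar multiple of $\tilde{\sigma}$, there is nothing to check for $h$. For $x=\tfrac{1}{4}r\,\cdot$, the Leibniz rule together with $\tilde{\nabla}_Ir=2T_I$ yields
\begin{equation*}
	\tr_{\tilde{g}}(r\tilde{\sigma})=r\tr_{\tilde{g}}\tilde{\sigma},\quad
	T\contraction(r\tilde{\sigma})=r(T\contraction\tilde{\sigma}),\quad
	\divergence_{\tilde{g}}(r\tilde{\sigma})=-2(T\contraction\tilde{\sigma})+r\divergence_{\tilde{g}}\tilde{\sigma},
\end{equation*}
so each right-hand side vanishes to one higher order than the input; combined with the ceiling identity $\lceil(n-2+w)/2\rceil+1=\lceil(n-2+(w+2))/2\rceil$, this gives what is needed.

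The substantive part is $y=\ambientLichnerowicz$. My approach is to establish three commutation identities and then read off the orders. For the trace, a short index calculation---using $\tilde{g}^{IJ}\tilde{R}_{IKJL}=\widetilde{\Ric}_{KL}$---shows that $\tr\widetilde{\Ric}^\circ\tilde{\sigma}$ and $\tr\mathring{\tilde{R}}\tilde{\sigma}$ both equal $\widetilde{\Ric}^{KL}\tilde{\sigma}_{KL}$, so they cancel from $\tr\ambientLichnerowicz\tilde{\sigma}$ and one obtains the exact identity $\tr_{\tilde{g}}\circ\ambientLichnerowicz=\tilde{\Delta}\circ\tr_{\tilde{g}}$. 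For the divergence, \eqref{eq:CommutatorDivLichnerowicz} is already supplied. The essential new identity concerns the $T$-contraction:
\begin{equation*}
	T\contraction\circ\ambientLichnerowicz
	=\ambientHodge\circ(T\contraction)-2\divergence_{\tilde{g}}.
\end{equation*}
I would derive this by commuting $T^I$ past both covariant derivatives in $\tilde{\Delta}\tilde{\sigma}_{IJ}$ using $\tilde{\nabla}_KT^I=\delta_K^I$---this picks up the boundary term $-2(\divergence_{\tilde{g}}\tilde{\sigma})_J$---then using \eqref{eq:TAmbientCurvature} to annihilate the contribution of $\mathring{\tilde{R}}$ and to reduce that of $\widetilde{\Ric}^\circ$ to $\widetilde{\Ric}_J{}^K(T\contraction\tilde{\sigma})_K$, and finally recognizing $\tilde{\Delta}(T\contraction\tilde{\sigma})_J+\widetilde{\Ric}_J{}^K(T\contraction\tilde{\sigma})_K$ as $\ambientHodge(T\contraction\tilde{\sigma})_J$ by Bochner's formula.

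With these three identities in place, the stated order bounds follow mechanically. In the odd case $\widetilde{\Ric}=O(r^\infty)$, so every right-hand side above is $O(r^\infty)$, which is exactly the TT requirement. In the even case, \eqref{eq:LaplacianOnFunctions} and \eqref{eq:LaplacianOnForms} lower the vanishing order by exactly one, Lemma \ref{lem:DRic} controls $(D\widetilde{\Ric})^\circ\tilde{\sigma}$ by $O^-(r^{n/2-1})$, and the ceiling identity $\lceil(n-2+w)/2\rceil-1=\lceil(n-4+w)/2\rceil$ matches each bound to the threshold at weight $w-2$; the stated weight ranges for $x$ and $y$ in the even case are exactly those ensuring both input and output lie in the natural domain of definition. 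The principal obstacle I anticipate is the careful derivation of the $T$-contraction commutation formula for $\ambientLichnerowicz$; once this identity is in hand, the remaining verification is essentially bookkeeping with the ceilings.
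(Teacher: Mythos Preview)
Your proposal is correct and follows essentially the same route as the paper's proof: $h$ is trivial, $x$ is handled via the Leibniz rule with $\tilde\nabla_I r=2T_I$, and for $y$ one uses the commutation identities $\tr_{\tilde g}\circ\ambientLichnerowicz=\tilde\Delta\circ\tr_{\tilde g}$, \eqref{eq:CommutatorDivLichnerowicz} together with Lemma~\ref{lem:DRic}, and a $T$-contraction identity derived from $\tilde\nabla T=\id$ and \eqref{eq:TAmbientCurvature}. The only cosmetic difference is that the paper writes the $T$-contraction identity with the connection Laplacian, $T\contraction\ambientLichnerowicz\tilde\sigma=\tilde\Delta(T\contraction\tilde\sigma)-2\divergence_{\tilde g}\tilde\sigma$ (invoking \eqref{eq:TAmbientCurvature} to kill the curvature terms), whereas your version with $\ambientHodge$ keeps the residual $\widetilde{\Ric}^{\phantom{I}K}_{I}(T\contraction\tilde\sigma)_K$ term explicit via Bochner; either form yields the required $O^-$-estimate.
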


\begin{proof}
	Since the case $n$ odd is easier to prove, we discuss the case $n$ even.
	It is clear that $h(\tilde{\caS}_\mathrm{aTT}^X(w))\subset\tilde{\caS}_\mathrm{aTT}^X(w)$.
	For $\tilde{\sigma}\in\tilde{\caS}_\mathrm{aTT}^X(w)$,
	$T\contraction(r\tilde{\sigma})=rT\contraction\tilde{\sigma}=O^-(r^{\ceil{(n-2+w)/2}+2})$,
	$\tr_{\tilde{g}}(r\tilde{\sigma})=r\tr_{\tilde{g}}\tilde{\sigma}=O(r^{\ceil{(n-2+w)/2}+1})$,
	and
	\begin{equation*}
		\divergence_{\tilde{g}}(r\tilde{\sigma})
		=-2T\contraction\tilde{\sigma}+r\divergence_{\tilde{g}}\tilde{\sigma}
		=O^-(r^{\ceil{\frac{n-2+w}{2}}+1}).
	\end{equation*}
	Hence $x\tilde{\sigma}\in\tilde{\caS}_\mathrm{aTT}^X(w+2)$.
	It remains to show that $y\tilde{\sigma}\in\tilde{\caS}_\mathrm{aTT}^X(w-2)$.
	The trace of $\ambientLichnerowicz\tilde{\sigma}$ is
	$\tr_{\tilde{g}}\ambientLichnerowicz\tilde{\sigma}=\tilde{\Delta}(\tr_{\tilde{g}}\tilde{\sigma})
	=O(r^{\ceil{(n-2+w)/2}-1})$ by \eqref{eq:LaplacianOnFunctions}.
	Furthermore,
	\begin{equation*}
		\tensor{\tilde{\nabla}}{_K}(\tensor{T}{^J}\tensor{\tilde{\sigma}}{_I_J})
		=\tensor{\delta}{_K^J}\tensor{\tilde{\sigma}}{_I_J}
		+\tensor{T}{^J}\tensor{\tilde{\nabla}}{_K}\tensor{\tilde{\sigma}}{_I_J}
		=\tensor{\tilde{\sigma}}{_I_K}+\tensor{T}{^J}\tensor{\tilde{\nabla}}{_K}\tensor{\tilde{\sigma}}{_I_J}
	\end{equation*}
	and hence
	\begin{equation*}
		\begin{split}
			\tilde{\Delta}(\tensor{T}{^J}\tensor{\tilde{\sigma}}{_I_J})
			&=-\tensor{\tilde{\nabla}}{^K}\tensor{\tilde{\sigma}}{_I_K}
			-\tensor{\tilde{\nabla}}{^K}(\tensor{T}{^J}\tensor{\tilde{\nabla}}{_K}\tensor{\tilde{\sigma}}{_I_J})\\
			&=-2\tensor{\tilde{\nabla}}{^K}\tensor{\tilde{\sigma}}{_I_K}
			-\tensor{T}{^J}\tensor{\tilde{\nabla}}{^K}\tensor{\tilde{\nabla}}{_K}\tensor{\tilde{\sigma}}{_I_J}
			=-2\tensor{\tilde{\nabla}}{^K}\tensor{\tilde{\sigma}}{_I_K}
			+\tensor{T}{^J}\ambientLichnerowicz\tensor{\tilde{\sigma}}{_I_J};
		\end{split}
	\end{equation*}
	the last equality is because of \eqref{eq:TAmbientCurvature}.
	This implies $T\contraction\ambientLichnerowicz\tilde{\sigma}=O^-(r^{\ceil{(n-2+w)/2}})$.
	Finally, \eqref{eq:CommutatorDivLichnerowicz} and Lemma \ref{lem:DRic} show
	$\divergence_{\tilde{g}}\ambientLichnerowicz\tilde{\sigma}=O^-(r^{\ceil{(n-2+w)/2}-1})$.
\end{proof}

\begin{thm}
	\label{thm:GJMSAsLaplacianPower}
	Let $k\in\bbZ_+$ if $n$ odd, and $k\in\set{1,2,\dots,n/2}$ if $n$ even.
	For any $\varphi\in\caS_0(-n/2+2+k)$, let $\tilde{\sigma}\in\tilde{\caS}(-n/2+2+k)$ be
	any extension of the ambient lift $\tilde{\varphi}$.
	Then $\ambientLichnerowicz^k\tilde{\sigma}|_\caG$ depends only on $\varphi$ and not on the extension.
	Furthermore, $\ambientLichnerowicz^k\tilde{\sigma}|_{TM}$ makes sense as a section in $\caS(-n/2+2-k)$.
\end{thm}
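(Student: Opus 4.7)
The plan is to establish the two assertions in turn, exploiting the $\mathfrak{sl}(2)$-relations for the independence statement and combining Lemma~\ref{lem:AmbientLift} with Proposition~\ref{prop:MappingPropertyOfSL2} for the restriction statement.

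For the independence of $\ambientLichnerowicz^k\tilde{\sigma}|_\caG$ on the chosen extension, I would take two extensions $\tilde{\sigma}_1,\tilde{\sigma}_2$ of the ambient lift $\tilde{\varphi}$. They agree on $\caG$ by Lemma~\ref{lem:AmbientLift}, hence $\tilde{\sigma}_1-\tilde{\sigma}_2=r\tilde{\eta}=4x\tilde{\eta}$ for some $\tilde{\eta}\in\tilde{\caS}(-n/2+k)$. The crucial point is that at this weight $h$ acts by the scalar $(-n/2+k)+n/2-1=k-1$, so $(h-k+1)\tilde{\eta}=0$. Applying the commutation identity \eqref{eq:SL2Commutations1},
\[
\ambientLichnerowicz^k(\tilde{\sigma}_1-\tilde{\sigma}_2)=4y^kx\tilde{\eta}=4xy^k\tilde{\eta}-4ky^{k-1}(h-k+1)\tilde{\eta}=4xy^k\tilde{\eta},
\]
which is $O(r)$ and therefore vanishes on $\caG$.

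For the restriction to $TM$, I would use Lemma~\ref{lem:AmbientLift} to pick a preferred extension $\tilde{\sigma}_0$ of $\tilde{\varphi}$ lying in $\tilde{\caS}_\mathrm{TT}^X(-n/2+2+k)$ when $n$ is odd, or in $\tilde{\caS}_\mathrm{aTT}^X(-n/2+2+k)$ when $n$ is even. Proposition~\ref{prop:MappingPropertyOfSL2} then shows that each application of $y=\ambientLichnerowicz$ preserves these subspaces, provided the input weight lies in the admissible range. At the $j$-th iteration ($j=0,\ldots,k-1$) the input weight is $-n/2+2+k-2j$, and a quick check shows that this lies in $[-n,2]$ precisely when $1\le k\le n/2$ (trivially for $n$ odd). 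Consequently $\ambientLichnerowicz^k\tilde{\sigma}_0$ lies in the corresponding TT or aTT space at weight $-n/2+2-k$; in both cases the defining estimates force $T\contraction\ambientLichnerowicz^k\tilde{\sigma}_0$ to have vanishing restriction to $T\caG$ on $\caG$, so $\ambientLichnerowicz^k\tilde{\sigma}_0|_{TM}\in\caS(-n/2+2-k)$ is well-defined. Combining with the first part, the same $|_{TM}$-restriction is produced by any extension.

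The main hurdle is not conceptual but bookkeeping: one has to verify that the weight constraints of Proposition~\ref{prop:MappingPropertyOfSL2} remain valid at every iteration, and to track the $O^-(r^m)$ vanishing orders carefully enough to conclude that the $T$-contraction of $\ambientLichnerowicz^k\tilde{\sigma}_0$ really does restrict to zero on $T\caG|_\caG$ (the borderline case being $k=n/2$, where the aTT bound $T\contraction\tilde{\sigma}=O^-(r)$ is exactly what is needed). Once these points are settled, the whole argument reduces to the algebraic manipulation with the $\mathfrak{sl}(2)$-triple carried out above.
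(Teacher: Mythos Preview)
Your proposal is correct and follows essentially the same route as the paper: the independence of the extension is obtained from the $\mathfrak{sl}(2)$ identity \eqref{eq:SL2Commutations1} applied at weight $-n/2+k$ (where $h$ acts as $k-1$), and the well-definedness of the $|_{TM}$-restriction is obtained by choosing the extension in $\tilde{\caS}_\mathrm{TT}^X$ or $\tilde{\caS}_\mathrm{aTT}^X$ via Lemma~\ref{lem:AmbientLift} and iterating Proposition~\ref{prop:MappingPropertyOfSL2}. Your explicit weight-range check for the iteration and your remark on the borderline case $k=n/2$ are accurate and simply make explicit what the paper leaves to the reader.
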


\begin{proof}
	We work on the case $n$ even only.
	Any two extensions of $\tilde{\varphi}$ differs by a tensor of the form $r\tilde{\tau}$,
	where $\tilde{\tau}\in\tilde{\caS}_0(-n/2+k)$.
	Equation \eqref{eq:SL2Commutations1} shows that the commutator $[\ambientLichnerowicz^k,r]$ vanishes on
	$\tilde{\caS}_0(-n/2+k)$ and hence $\ambientLichnerowicz^k(r\tilde{\tau})|_\caG=0$.
	In particular, using Lemma \ref{lem:AmbientLift} one can take
	$\tilde{\sigma}\in\tilde{\caS}^X_\mathrm{aTT}(-n/2+2+k)$ as an extension of $\tilde{\varphi}$.
	Then by Proposition \ref{prop:MappingPropertyOfSL2},
	$\ambientLichnerowicz^k\tilde{\sigma}\in\tilde{\caS}^X_\mathrm{aTT}(-n/2+2-k)$
	and $\ambientLichnerowicz^k\tilde{\sigma}|_{TM}$ is defined.
\end{proof}

\begin{thm}
	\label{thm:GJMSAsObstruction}
	Let $k\in\bbZ_+$ if $n$ odd, and $k\in\set{1,2,\dots,n/2}$ if $n$ even.
	Let $\varphi\in\caS_0(-n/2+2+k)$ and $\tilde{\varphi}$ its ambient lift.
	Then there exists a solution
	$\tilde{\sigma}\in\tilde{\caS}^X_\mathrm{TT}(-n/2+2+k)$ if $n$ odd,
	and $\tilde{\sigma}\in\tilde{\caS}^X_\mathrm{aTT}(-n/2+2+k)$ if $n$ even, to the problem
	\begin{equation}
		\label{eq:HarmonicExtensionProblem}
		\ambientLichnerowicz\tilde{\sigma}=O(r^{k-1}),\qquad
		\tilde{\sigma}|_\caG=\tilde{\varphi},
	\end{equation}
	which is unique modulo $O(r^k)$.
	For any such $\tilde{\sigma}$, $(r^{1-k}\ambientLichnerowicz\tilde{\sigma})|_{\caG}$
	is independent of the ambiguity that lives in $\tilde{\sigma}$,
	and agrees with $\ambientLichnerowicz^k\tilde{\sigma}|_{\caG}$ up to a constant factor:
	\begin{equation}
		\label{eq:EqualityOfTwoOperators}
		(r^{1-k}\ambientLichnerowicz\tilde{\sigma})|_{\caG}
		=\frac{1}{4^{k-1}(k-1)!^2}\ambientLichnerowicz^k\tilde{\sigma}|_{\caG}.
	\end{equation}
\end{thm}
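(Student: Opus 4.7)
The plan is to construct $\tilde{\sigma}$ as a formal expansion in powers of $r$ by induction, using the $\mathfrak{sl}(2)$ commutation relations to reduce each step to a scalar equation, and then to extract \eqref{eq:EqualityOfTwoOperators} from \eqref{eq:SL2Commutations3} at $m=k$. The central computation is that for $\tilde{\tau}$ of weight $-n/2+2+k-2m$ one has $(h+m-1)\tilde{\tau}=(k-m)\tilde{\tau}$, so \eqref{eq:SL2Commutations2} (rearranged for $y\circ x^m$) gives
\begin{equation*}
    \ambientLichnerowicz(r^m\tilde{\tau})=-4m(k-m)\,r^{m-1}\tilde{\tau}+r^m\,\ambientLichnerowicz\tilde{\tau}.
\end{equation*}
The coefficient $m(k-m)$ is nonzero for $1\le m\le k-1$ and vanishes at $m=k$; this single identity simultaneously governs solvability, uniqueness, and the independence statement.

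For existence, I would begin with $\tilde{\sigma}_{(0)}$ an (approximately) TT ambient lift of $\tilde{\varphi}$ provided by Lemma~\ref{lem:AmbientLift}, and inductively define $\tilde{\sigma}_{(m)}:=\tilde{\sigma}_{(m-1)}+r^m\tilde{\tau}_m$ for $m=1,\dots,k-1$, with $\tilde{\tau}_m|_\caG$ determined by the leading equation $-4m(k-m)\tilde{\tau}_m|_\caG=(r^{1-m}\ambientLichnerowicz\tilde{\sigma}_{(m-1)})|_\caG$. A TT/aTT extension of $\tilde{\tau}_m|_\caG$ to the ambient space is produced by the same Borel-type procedure used in the proof of Lemma~\ref{lem:AmbientLift}, and Proposition~\ref{prop:MappingPropertyOfSL2} ensures that the correction $r^m\tilde{\tau}_m$ remains in the prescribed ambient space, so $\tilde{\sigma}:=\tilde{\sigma}_{(k-1)}$ solves \eqref{eq:HarmonicExtensionProblem}. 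Uniqueness modulo $O(r^k)$ follows from the same identity applied to a difference $\tilde{\sigma}-\tilde{\sigma}'=r\tilde{\rho}$: expanding $\tilde{\rho}=\sum_{j\ge 0}r^j\tilde{\rho}_j$ and matching orders in $\ambientLichnerowicz(r\tilde{\rho})=O(r^{k-1})$ forces $\tilde{\rho}_j|_\caG=0$ for $0\le j\le k-2$, since at each such order the coefficient $4(j+1)(k-j-1)$ is nonzero.

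Independence of $(r^{1-k}\ambientLichnerowicz\tilde{\sigma})|_\caG$ from the $O(r^k)$-ambiguity is the same identity at $m=k$: if $\tilde{\sigma}'=\tilde{\sigma}+r^k\tilde{\rho}$, then the leading coefficient $k(k-k)=0$ collapses the identity to $\ambientLichnerowicz(r^k\tilde{\rho})=r^k\ambientLichnerowicz\tilde{\rho}=O(r^k)$, so multiplying by $r^{1-k}$ yields $O(r)$, which vanishes on $\caG$. For \eqref{eq:EqualityOfTwoOperators}, write $\ambientLichnerowicz\tilde{\sigma}=4^{k-1}x^{k-1}\tilde{\psi}+O(r^k)$ with $\tilde{\psi}|_\caG=(r^{1-k}\ambientLichnerowicz\tilde{\sigma})|_\caG$ of weight $-n/2-k+2$; then $h\tilde{\psi}=(1-k)\tilde{\psi}$ and hence $h(h+1)\cdots(h+k-2)\tilde{\psi}=(-1)^{k-1}(k-1)!\,\tilde{\psi}$. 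Applying $\ambientLichnerowicz^{k-1}$ and invoking \eqref{eq:SL2Commutations3}, the terms $xZ_k\tilde{\psi}$ and $\ambientLichnerowicz^{k-1}(O(r^k))$ both restrict to zero on $\caG$ (the latter because $\ambientLichnerowicz$ drops the order by at most one on symmetric 2-tensors, the algebraic curvature terms being harmless), leaving $(\ambientLichnerowicz^k\tilde{\sigma})|_\caG=4^{k-1}\cdot(-1)^{k-1}(k-1)!\cdot(-1)^{k-1}(k-1)!\,\tilde{\psi}|_\caG=4^{k-1}(k-1)!^2\,\tilde{\psi}|_\caG$, which is \eqref{eq:EqualityOfTwoOperators}.

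The main obstacle is the bookkeeping of the TT/aTT conditions in the even-dimensional case. At each induction step one must verify that the correction $r^m\tilde{\tau}_m$, after $\tilde{\tau}_m|_\caG$ is lifted from $M$ to $\tilde{\caG}$, actually sits inside $\tilde{\caS}^X_\mathrm{aTT}(-n/2+2+k)$, so that $\tilde{\sigma}_{(m)}$ does not drop out of the prescribed space. This requires combining Lemma~\ref{lem:AmbientLift} with the $x$-mapping statement of Proposition~\ref{prop:MappingPropertyOfSL2} and carefully matching the order thresholds $\lceil(n-2+w)/2\rceil$ governing the aTT spaces at each weight encountered in the induction.
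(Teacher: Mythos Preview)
Your proposal is correct and follows essentially the same route as the paper: an inductive construction of $\tilde{\sigma}_{(m)}$ using the identity $\ambientLichnerowicz(r^m\tilde{\tau})=4m(m-k)r^{m-1}\tilde{\tau}+r^m\ambientLichnerowicz\tilde{\tau}$ (equation \eqref{eq:AmbLichnerowiczAndrm} in the paper) to solve away the error order by order until the coefficient vanishes at $m=k$, followed by an application of \eqref{eq:SL2Commutations3} to $\ambientLichnerowicz\tilde{\sigma}=r^{k-1}\tilde{F}$ to obtain \eqref{eq:EqualityOfTwoOperators}. The paper's write-up is terser---it absorbs your explicit uniqueness and independence arguments into the phrase ``unique modulo $O(r^{m+1})$'' during the induction, and it defines $\tilde{F}$ as the exact quotient $r^{1-k}\ambientLichnerowicz\tilde{\sigma}$ rather than carrying along an $O(r^k)$ remainder---but the mathematics is the same.
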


\begin{proof}
	We work on the case $n$ even only.
	Let us begin with an arbitrary extension $\tilde{\sigma}_{(0)}\in\tilde{\caS}^X_\mathrm{aTT}(-n/2+2+k)$
	of $\tilde{\varphi}$.
	If an extension $\tilde{\sigma}_{(m-1)}$ satisfies
	$\ambientLichnerowicz\tilde{\sigma}_{(m-1)}=O(r^{m-1})$,
	then it has a modification $\tilde{\sigma}_{(m)}=\tilde{\sigma}_{(m-1)}+r^m\tilde{\sigma}_1$,
	$\tilde{\sigma}_1\in\tilde{\caS}^X_\mathrm{TT}(-n/2+2+k-2m)$, which is unique modulo $O(r^{m+1})$,
	satisfying $\ambientLichnerowicz\tilde{\sigma}_{(m)}=O(r^m)$.
	In fact, by \eqref{eq:SL2Commutations2}, we have
	\begin{equation}
		\label{eq:AmbLichnerowiczAndrm}
		\ambientLichnerowicz(r^m\tilde{\sigma}_1)
		=4mr^{m-1}(m-k)\tilde{\tau}+r^m\ambientLichnerowicz\tilde{\sigma}_1.
	\end{equation}
	Therefore $\tilde{\sigma}_1$ can be taken so that
	$\ambientLichnerowicz\tilde{\sigma}_{(m)}=O(r^m)$ unless $m=k$.
	Hence there is $\tilde{\sigma}$ with the property stated in the theorem.
	Let $\ambientLichnerowicz\tilde{\sigma}=r^{k-1}\tilde{F}$, $\tilde{F}\in\tilde{\caS}_\mathrm{TT}^X(-n/2+2-k)$.
	Then, by \eqref{eq:SL2Commutations3},
	$\ambientLichnerowicz^k\tilde{\sigma}=4^{k-1}y^{k-1}x^{k-1}\tilde{F}=4^{k-1}(k-1)!^2\tilde{F}+O(r)$.
	Hence \eqref{eq:EqualityOfTwoOperators}.
\end{proof}

Except in the case where $n$ is even and $k=n/2$,
$(\ambientLichnerowicz^k\tilde{\sigma})|_{TM}$ is trace-free since
$\tr_{\tilde{g}}\ambientLichnerowicz^k\tilde{\sigma}$ and
$T\contraction\ambientLichnerowicz^k\tilde{\sigma}$ are both $O(r)$.

\begin{dfn}
	Let $(M,[g])$ be a conformal manifold of dimension $n\ge 3$ and $\tilde{g}$ an ambient metric.
	We call
	\begin{equation*}
		P_k\colon\caS_0(-n/2+2+k)\to\caS_0(-n/2+2-k),\qquad
		P_k\varphi=\tf_{\bm{g}}(\ambientLichnerowicz^k\tilde{\sigma}|_{TM})
	\end{equation*}
	the \emph{GJMS operator on trace-free symmetric 2-tensors},
	where $\tilde{\sigma}\in\tilde{\caS}(-n/2+2+k)$ is any extension of the ambient lift of $\varphi$.
	(One can remove $\tf_{\bm{g}}$ unless $n$ is even and $k=n/2$.)
	In particular, when $n=\dim M\ge 4$ even,
	\begin{equation*}
		P=P_{n/2}\colon\caS_0(2)\to\caS_0(2-n)
	\end{equation*}
	is called the \emph{critical GJMS operator on trace-free symmetric 2-tensors}.
\end{dfn}

\begin{thm}
	\label{thm:InvarianceOfGJMS}
	The GJMS operators on trace-free symmetric 2-tensors do not depend on the choice of $\tilde{g}$,
	and hence are conformally invariant differential operators.
\end{thm}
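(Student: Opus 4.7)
The plan is to decompose the ambient-metric ambiguity into two sources---(a) the action of ambient-equivalence maps, and (b) the residual freedom, which is $O(r^\infty)$ for $n$ odd and $O^+(r^{n/2})$ for $n$ even---and to verify invariance under each.

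For (a), the argument is naturality. Let $\Phi$ be an ambient-equivalence map. Because $\Phi|_\caG=\id_\caG$ and $\Phi_*T=T$, pullback by $\Phi$ preserves the defining conditions of $\tilde{\caS}^X_\mathrm{TT}$ and $\tilde{\caS}^X_\mathrm{aTT}$ and sends an extension of the ambient lift for $\tilde{g}$ to one for $\Phi^*\tilde{g}$. Since the Lichnerowicz Laplacian is natural under isometries, $\ambientLichnerowicz^k_{\Phi^*\tilde{g}}(\Phi^*\tilde{\sigma})=\Phi^*(\ambientLichnerowicz^k_{\tilde{g}}\tilde{\sigma})$, and restriction to $TM$ gives the same weighted tensor on $M$ in view of $\Phi|_\caG=\id_\caG$.

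For (b), the odd-dimensional case is immediate, since $O(r^\infty)$-modifications affect no finite jet along $\caG$. For $n$ even, write $\tilde{g}'=\tilde{g}+A$ with $A=O^+(r^{n/2})$. The invariance of $\tilde{\caS}^X_\mathrm{aTT}(w)$ under such modifications, verified in Section \ref{sec:Preliminaries}, implies that any aTT extension $\tilde{\sigma}$ of the ambient lift with respect to $\tilde{g}$ is simultaneously aTT with respect to $\tilde{g}'$; combined with the uniqueness clause of Lemma \ref{lem:AmbientLift}, this forces the ambient lift to coincide for the two metrics. Hence I may use a common $\tilde{\sigma}$ as starting data for both constructions.

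It remains to compare $\ambientLichnerowicz^k_{\tilde{g}}\tilde{\sigma}|_{TM}$ and $\ambientLichnerowicz^k_{\tilde{g}'}\tilde{\sigma}|_{TM}$. By the obstruction characterization of Theorem \ref{thm:GJMSAsObstruction}, this is equivalent to comparing $(r^{1-k}\ambientLichnerowicz\tilde{\sigma})|_\caG$ for the two metrics after refining $\tilde{\sigma}$ to satisfy $\ambientLichnerowicz\tilde{\sigma}=O(r^{k-1})$. Starting from the formula for $\tensor{(\tilde{\Gamma}')}{^K_I_J}-\tensor{\tilde{\Gamma}}{^K_I_J}$ derived in Section \ref{sec:Preliminaries} and the analogous expressions for the Ricci and Riemann tensor differences---all controlled by $\overline{A}=r^{-n/2}A$---one verifies that $\ambientLichnerowicz_{\tilde{g}'}-\ambientLichnerowicz_{\tilde{g}}$ on $\tilde{\caS}^X_\mathrm{aTT}$ is of sufficiently high order to ensure that, for $k<n/2$, the two iterated Laplacians agree on $\tilde{\sigma}$ modulo terms whose contribution vanishes on $\caG$ after multiplication by $r^{1-k}$. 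The main obstacle is the critical case $k=n/2$: here the $O^+(r^{n/2})$-ambiguity enters at precisely the order that would contribute to the leading term, and one must show that its contribution to $(\ambientLichnerowicz^{n/2}_{\tilde{g}'}-\ambientLichnerowicz^{n/2}_{\tilde{g}})\tilde{\sigma}|_{TM}$ is a pure-trace tensor on $M$. I would establish this by tracking the $\overline{A}$-dependence of the surviving contribution and exploiting both that $\overline{A}|_{TM}$ is trace-free with respect to $\bm{g}$ and that $T\contraction\overline{A}=O^-(r)$, to identify the non-vanishing part as a multiple of $\bm{g}$, hence annihilated by $\tf_{\bm{g}}$.
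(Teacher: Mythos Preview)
Your treatment of part (a) and of the non-critical even case $k\le n/2-1$ follows the paper's own proof closely. The paper first passes to normal form via an ambient-equivalence map (your (a)), then for $n$ even compares two normal-form ambient metrics using \cite[(3.16), (6.1)]{Fefferman:2012vr} to bound the Christoffel and curvature differences, obtaining $(\Hat{\tilde{\Delta}}_{\mathrm L}-\ambientLichnerowicz)\tilde\sigma=O(\rho^{n/2-2})$ with the tangential components even $O(\rho^{n/2-1})$. Via the obstruction characterization of Theorem~\ref{thm:GJMSAsObstruction} this suffices for $k\le n/2-1$, exactly as you outline.

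The divergence is at the critical case $k=n/2$. The paper explicitly does \emph{not} attempt a direct verification here, remarking that ``the direct verification of the conformal invariance is not easy.'' Instead, invariance of $P=P_{n/2}$ is deferred to Theorem~\ref{thm:DifferentialOfObstruction}, which identifies $P\varphi$ with $2(-1)^{n/2-1}(n-2)\,\tf_{\bm g}\caO'_{\bm g}\varphi$; since the obstruction tensor (hence its linearization) is manifestly independent of the choice of ambient metric, invariance of $P$ follows for free. Your proposal to track the $\overline A$-dependence directly and argue that the surviving contribution is pure trace is plausible in spirit, but you have not carried it out, and the estimates available to you---curvature differences only $O(\rho^{n/2-2})$, tangential Lichnerowicz difference only $O(\rho^{n/2-1})$---land at precisely the order that contributes to $(r^{1-n/2}\ambientLichnerowicz\tilde\sigma)|_{TM}$. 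Showing that this contribution is a multiple of $\bm g$ would require a finer computation than anything in Section~\ref{sec:Preliminaries}; absent that, the critical case remains a gap in your argument, and the paper's detour through $\caO'_{\bm g}$ is what actually closes it.
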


For the case where $n$ is even and $k=n/2$, the direct verification of the conformal invariance is not easy.
We will see in Theorem \ref{thm:DifferentialOfObstruction} that, up to a constant factor,
$P\varphi$ is equal to $\tf_{\bm{g}}\caO'_{\bm{g}}\varphi$,
which is clearly conformally invariant.
Here, we prove the theorem in the case $n$ odd and the case $n$ even, $k\le n/2-1$.

\begin{proof}[Proof of Theorem \ref{thm:InvarianceOfGJMS} except the case where $(n,k)=(\text{even},n/2)$]
	By Theorem \ref{thm:GJMSAsObstruction}, we may work with $r^{1-k}\ambientLichnerowicz\tilde{\sigma}$
	instead of $\ambientLichnerowicz^k\tilde{\sigma}$.
	Let $\tilde{g}$ be an ambient metric, $\varphi\in\caS_0(-n/2+2+k)$ and $\tilde{\sigma}$
	a solution to the problem stated in Theorem \ref{thm:GJMSAsObstruction}.
	Then, if $\Phi$ is an ambient-equivalence map, $\Phi^*\tilde{\sigma}$ solves the same problem
	with respect to $\Phi^*\tilde{g}$.
	Since
	$(\Phi^*r)^{1-k}\tilde{\Delta}_{\mathrm{L},\Phi^*\tilde{g}}(\Phi^*\tilde{\sigma})
	=\Phi^*(r^{1-k}\ambientLichnerowicz\sigma)$,
	the restrictions of $(\Phi^*r)^{1-k}\tilde{\Delta}_{\mathrm{L},\Phi^*\tilde{g}}(\Phi^*\tilde{\sigma})$
	and $r^{1-k}\ambientLichnerowicz\sigma$ to $TM$ coincide.
	Therefore we may assume that $\tilde{g}$ is in normal form.

	When $n$ is odd, the assertion is now clear because $\tilde{g}$ is formally unique if it is in normal form.
	So we assume that $n$ is even in what follows.
	It suffices to show that, if $\tilde{g}$, $\Hat{\tilde{g}}$ are ambient metrics in normal form
	and $\tilde{\sigma}\in\tilde{\caS}^X_\mathrm{aTT}(-n/2+2+k)$,
	\begin{equation*}
		\Hat{\tilde{\Delta}}_\mathrm{L}\tilde{\sigma}-\ambientLichnerowicz\tilde{\sigma}=O(\rho^{n/2-2})\qquad
		\text{and}\qquad
		\Hat{\tilde{\Delta}}_\mathrm{L}\tensor{\tilde{\sigma}}{_i_j}
		-\ambientLichnerowicz\tensor{\tilde{\sigma}}{_i_j}=O(\rho^{n/2-1}).
	\end{equation*}
	Let $\tensor{D}{^K_I_J}=\tensor{\Hat{\tilde{\Gamma}}}{^K_I_J}-\tensor{\tilde{\Gamma}}{^K_I_J}$.
	From~\cite[Equation (3.16)]{Fefferman:2012vr}, one concludes that
	$\tensor{D}{^K_I_J}=O(\rho^{n/2-1})$ and $\tensor{\tilde{\nabla}}{^I}\tensor{D}{^K_I_J}=O(\rho^{n/2-1})$.
	Therefore
	\begin{equation*}
		\Hat{\tilde{\Delta}}\tensor{\tilde{\sigma}}{_I_J}-\tilde{\Delta}\tensor{\tilde{\sigma}}{_I_J}
		=\tensor{\tilde{\nabla}}{^K}(2\tensor{D}{^L_K_(_I}\tensor{\tilde{\sigma}}{_J_)_L})+O(\rho^{n/2-1})
		=O(\rho^{n/2-1}).
	\end{equation*}
	In addition, $\widehat{\widetilde{\Ric}}=\widetilde{\Ric}+O(\rho^{n/2-1})$ and
	$\Hat{\tilde{R}}=\tilde{R}+O(\rho^{n/2-2})$ by~\cite[Equation (6.1)]{Fefferman:2012vr}.
	Hence
	$\Hat{\tilde{\Delta}}_\mathrm{L}\tilde{\sigma}-\ambientLichnerowicz\tilde{\sigma}=O(\rho^{n/2-2})$.
	Moreover, if
	$\tensor{\tilde{S}}{_I_J_K_L}=\tensor{\Hat{\tilde{R}}}{_I_J_K_L}-\tensor{\tilde{R}}{_I_J_K_L}$, then
	\begin{equation*}
		\begin{split}
			\Hat{\tilde{\Delta}}_\mathrm{L}\tensor{\tilde{\sigma}}{_i_j}
			-\ambientLichnerowicz\tensor{\tilde{\sigma}}{_i_j}
			&=-2t^{-4}\tensor{(g^{-1}_\rho)}{^k^m}\tensor{(g^{-1}_\rho)}{^l^n}
			\tensor{\tilde{S}}{_i_k_j_l}\tensor{\tilde{\sigma}}{_m_n}\\
			&\phantom{=\;}-4t^{-3}\tensor{(g^{-1}_\rho)}{^k^m}
			\tensor{\tilde{S}}{_i_k_j_\infty}\tensor{\tilde{\sigma}}{_m_0}
			+4t^{-4}\rho\tensor{(g^{-1}_\rho)}{^k^m}
			\tensor{\tilde{S}}{_i_k_j_\infty}\tensor{\tilde{\sigma}}{_m_\infty}\\
			&\phantom{=\;}-2t^{-2}
			\tensor{\tilde{S}}{_i_\infty_j_\infty}\tensor{\tilde{\sigma}}{_0_0}
			+4t^{-3}\rho
			\tensor{\tilde{S}}{_i_\infty_j_\infty}\tensor{\tilde{\sigma}}{_0_\infty}
			-8t^{-4}\rho^2
			\tensor{\tilde{S}}{_i_\infty_j_\infty}\tensor{\tilde{\sigma}}{_\infty_\infty}\\
			&\phantom{=\;}+O(\rho^{n/2-1}).
		\end{split}
	\end{equation*}
	Again by~\cite[Equation (6.1)]{Fefferman:2012vr}, we have
	$\tensor{\tilde{S}}{_i_j_k_l}=O(\rho^{n/2-1})$, $\tensor{\tilde{S}}{_i_j_k_\infty}=O(\rho^{n/2-1})$,
	$\tensor{\tilde{S}}{_i_\infty_k_\infty}=O(\rho^{n/2-2})$ and hence
	$\Hat{\tilde{\Delta}}_\mathrm{L}\tensor{\tilde{\sigma}}{_i_j}
	-\ambientLichnerowicz\tensor{\tilde{\sigma}}{_i_j}=O(\rho^{n/2-1})$.
\end{proof}

We close this section with a lemma that is proved just like the construction of $\tilde{\sigma}$ in
Theorem \ref{thm:GJMSAsObstruction}.

\begin{lem}
	\label{lem:SolvingLaplacian}
	Let $k\in\bbZ_+$.
	For any $\tilde{f}_1\in\tilde{\caE}(-n/2-2+k)$, there exists $\tilde{f}\in\tilde{\caE}(-n/2+k)$ such that
	\begin{equation*}
		\tilde{\Delta}\tilde{f}=\tilde{f}_1+O(r^{k-1}).
	\end{equation*}
	Likewise, for any $\tilde{\tau}_1\in\tilde{\caT}(-n/2-1+k)$, there exists
	$\tilde{\tau}\in\tilde{\caT}(-n/2+1+k)$ such that
	\begin{equation*}
		\tilde{\Delta}\tilde{\tau}=\tilde{\tau}_1+O(r^{k-1}).
	\end{equation*}
	In both problems, we may arbitrarily prescribe the values along $\caG$;
	if we prescribe $\tilde{f}|_\caG$, $\tilde{\tau}|_\caG$,
	then $\tilde{f}$, $\tilde{\tau}$ are unique modulo $O(r^k)$.
\end{lem}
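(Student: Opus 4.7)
The plan is to follow the same inductive recipe used in the proof of Theorem \ref{thm:GJMSAsObstruction}, but now applied to the scalar and 1-form Laplacians. The central input is the formula $\tilde{\Delta}(r^m)=-2m(2m+n)r^{m-1}$ already computed in the proof of \eqref{eq:LaplacianOnFunctions}, together with the identity $\tilde{\nabla}_Tr=2r$ and, for the 1-form case, Lemma \ref{lem:ActionOfNablaT}.

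First I would fix an arbitrary smooth extension $\tilde{f}_{(0)}\in\tilde{\caE}(-n/2+k)$ of the prescribed restriction $\tilde{f}|_\caG$ and inductively construct $\tilde{f}_{(m)}$ for $1\le m\le k-1$ by setting
\begin{equation*}
	\tilde{f}_{(m)}=\tilde{f}_{(m-1)}+r^m\tilde{h}_m,\qquad \tilde{h}_m\in\tilde{\caE}(-n/2+k-2m),
\end{equation*}
where $\tilde{h}_m|_\caG$ is chosen to cancel the leading error. The essential computation is the product rule
\begin{equation*}
	\tilde{\Delta}(r^m\tilde{h}_m)=\tilde{\Delta}(r^m)\tilde{h}_m+r^m\tilde{\Delta}\tilde{h}_m-2\cdot 2mr^{m-1}T\tilde{h}_m,
\end{equation*}
which, since $\tilde{h}_m$ has weight $-n/2+k-2m$, gives leading term $-4m(k-m)r^{m-1}\tilde{h}_m$. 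The coefficient $-4m(k-m)$ is nonzero in the range $1\le m\le k-1$, so at each step we can solve for $\tilde{h}_m|_\caG$ in terms of the restriction of $r^{-(m-1)}$ times the prior error. Applying Borel's lemma (or simply taking the formal sum truncated at order $r^{k-1}$) yields $\tilde{f}$ with $\tilde{\Delta}\tilde{f}=\tilde{f}_1+O(r^{k-1})$.

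For the 1-form case, the same scheme works after replacing Lemma \ref{lem:ActionOfNablaT}'s scalar weight identity by $\tilde{\nabla}_T\tilde{\eta}_m=(-n/2+k-2m)\tilde{\eta}_m$ for $\tilde{\eta}_m\in\tilde{\caT}(-n/2+1+k-2m)$. A straightforward computation then produces exactly the same coefficient $-4m(k-m)$ at the $m$-th step (the Ricci term from Bochner's formula, if one uses $\ambientHodge$ instead, contributes only at lower order relative to $r^{m-1}$, so plays no role in the leading obstruction). Hence the identical induction succeeds.

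For uniqueness, suppose $\tilde{f}'$ is a second solution with the same restriction along $\caG$, so that $\tilde{u}=\tilde{f}-\tilde{f}'$ satisfies $\tilde{u}|_\caG=0$ and $\tilde{\Delta}\tilde{u}=O(r^{k-1})$. Write $\tilde{u}=r\tilde{v}_1$ and compute $\tilde{\Delta}(r\tilde{v}_1)$ via the same product-rule identity: the leading-order relation $-4(k-1)\tilde{v}_1|_\caG=0$ forces $\tilde{v}_1|_\caG=0$ when $k>1$, so $\tilde{u}=O(r^2)$. Iterating this argument $k-1$ times, using the nonvanishing of $-4m(k-m)$ in the allowed range, shows $\tilde{u}=O(r^k)$. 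The same argument treats the 1-form case. There is no real obstacle here; the only thing to verify with care is that the coefficient $-4m(k-m)$ never vanishes in the iteration range, which is exactly why the construction stops at order $r^{k-1}$ and explains the modulo $O(r^k)$ indeterminacy in the statement.
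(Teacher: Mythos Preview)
Your proposal is correct and is exactly the argument the paper intends: the paper's own ``proof'' is simply the remark that the lemma is proved just like the inductive construction of $\tilde{\sigma}$ in Theorem~\ref{thm:GJMSAsObstruction}, and your write-up carries out precisely that induction, arriving at the same leading coefficient $-4m(k-m)=4m(m-k)$ as in \eqref{eq:AmbLichnerowiczAndrm}. The only superfluous remark is the appeal to Borel's lemma, since the iteration terminates after finitely many steps; your parenthetical already notes this.
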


\section{The variations of obstruction tensor and $Q$-curvature}
\label{sec:Variations}

Let $\tilde{g}$ be an ambient metric for a conformal manifold $(M,[g])$ of dimension $n\ge 3$.
Recall that, from general calculations on (pseudo-)Riemannian curvature tensors,
the differential of the Ricci tensor operator (which we write $\Ric$ here) is
\begin{equation}
	\label{eq:DifferentialRicci}
	\Ric'_{\tilde{g}}\tilde{\sigma}
	=\frac{1}{2}\ambientLichnerowicz\tilde{\sigma}-\divergence_{\tilde{g}}^*\caB_{\tilde{g}}\tilde{\sigma},
\end{equation}
where $\divergence_{\tilde{g}}^*$ is the dual of the divergence
$\tensor{(\divergence_{\tilde{g}}^*\tilde{\tau})}{_I_J}=\tensor{\tilde{\nabla}}{_(_I}\tensor{\tilde{\tau}}{_J_)}$
and $\caB_{\tilde{g}}$ is defined by
$\caB_{\tilde{g}}\tilde{\sigma}=\divergence_{\tilde{g}}\tilde{\sigma}+\frac{1}{2}d(\tr_{\tilde{g}}\tilde{\sigma})$.
Therefore, for $n$ even, a solution $\tilde{\sigma}\in\tilde{\caS}_\mathrm{aTT}^X(2)$
to the problem in Theorem \ref{thm:GJMSAsObstruction} approximately solves
$\Ric'_{\tilde{g}}\tilde{\sigma}=0$, and hence it is expected that we can read off $\caO'_{\bm{g}}\varphi$
from the asymptotics of $\tilde{\sigma}$.
This will finally turn out to be true,
but since the definition of $\caO$ depends on the existence theorem of normal-form ambient metrics,
in order to capture $\caO'_{\bm{g}}\varphi$ our starting point has to be infinitesimal modifications of
ambient metrics in normal form.
The differential equation that they (approximately) satisfy is different from
$\ambientLichnerowicz\tilde{\sigma}=0$.
So we shall establish a method for translating solutions of the two equations.

Let $(M,[g])$ be an $n$-dimensional conformal manifold with $n\ge 4$ even and $\varphi\in\caS_0(2)$.
Suppose that $\bm{g}_s$ is a family of conformal metrics
(here we use $s$ for the parameter, because $t$ will denote a coordinate on $\tilde{\caG}$)
with $\bm{g}_0=\bm{g}$ such that $\dot{\bm{g}}_s|_{s=0}=\varphi$.
Let $g\in[g]$ be any representative metric, and $g_s$ the corresponding representatives of $\bm{g}_s$.
By the method of~\cite{Fefferman:2012vr}, we can construct a family of ambient metrics
\begin{equation*}
	\tilde{g}_s=2\rho\,dt^2+2t\,dt\,d\rho+t^2g^s_\rho
\end{equation*}
such that $g_0^s=g_s$ and $g_\rho^s$ smoothly depends on the two variables $\rho$, $s$.
All these metrics satisfy $\widetilde{\Ric}_s=O(r^{n/2-1})$ and
$T\contraction\widetilde{\Ric}_s=O^-(r^{n/2})$.
Differentiating these equations, we conclude that
$\tilde{\sigma}=\tilde{\sigma}_\mathrm{norm}=(d/ds)\tilde{g}_s|_{s=0}$
solves
\begin{equation*}
	\Ric'_{\tilde{g}}\tilde{\sigma}=O(r^{n/2-1}),\qquad
	T\contraction\Ric'_{\tilde{g}}\tilde{\sigma}=O^-(r^{n/2}).
\end{equation*}
Note that it satisfies $T\contraction\tilde{\sigma}_\mathrm{norm}=0$,
$\tr_{\tilde{g}}\tilde{\sigma}_\mathrm{norm}=O(r)$, and hence
\begin{equation*}
	\tensor{T}{^I}\tensor{\tilde{\nabla}}{^J}\tensor{(\tilde{\sigma}_\mathrm{norm})}{_I_J}
	=\tensor{\tilde{\nabla}}{^J}(\tensor{T}{^I}\tensor{(\tilde{\sigma}_\mathrm{norm})}{_I_J})
	-\tensor{\tilde{g}}{^I^J}\tensor{(\tilde{\sigma}_\mathrm{norm})}{_I_J}
	=O(r);
\end{equation*}
therefore it holds that
\begin{equation}
	\label{eq:TContractionOfBianchiOfNormalSolution}
	T\contraction\caB_{\tilde{g}}\tilde{\sigma}_\mathrm{norm}
	=T\contraction\divergence_{\tilde{g}}\tilde{\sigma}_\mathrm{norm}
	+\frac{1}{2}T(\tr_{\tilde{g}}\tilde{\sigma}_\mathrm{norm})
	=O(r).
\end{equation}
Since the obstruction tensor $\caO=\caO_s$ is defined by
\begin{equation*}
	\caO_s=c_n(r^{1-n/2}\widetilde{\Ric}_s)|_{TM},\qquad
	c_n=(-1)^{n/2-1}\frac{2^{n-2}(n/2-1)!^2}{n-2},
\end{equation*}
we have
\begin{equation*}
	\caO'_{\bm{g}}\varphi=c_n(r^{1-n/2}\widetilde{\Ric}'_{\tilde{g}}\tilde{\sigma}_\mathrm{norm})|_{TM}.
\end{equation*}

\begin{lem}
	\label{lem:HarminizationOfNormalSolution}
	Let $\tilde{\sigma}_\mathrm{norm}$ be as above.
	Then, there exists a dilation-invariant vector field $\tilde{\xi}$ on $\tilde{\caG}$
	such that $\tilde{\xi}|_\caG=0$ and
	\begin{equation*}
		\caB_{\tilde{g}}(\tilde{\sigma}_\mathrm{norm}+\caK_{\tilde{g}}\tilde{\xi})=O(r^{n/2}),
	\end{equation*}
	where $\caK_{\tilde{g}}$ is the Killing operator:
	$\tensor{(\caK_{\tilde{g}}\tilde{\xi})}{_I_J}=2\tensor{\tilde{\nabla}}{_(_I}\tensor{\tilde{\xi}}{_J_)}$.
	Such $\tilde{\xi}$ is unique modulo $O(r^{n/2+1})$ and satisfies
	$\tilde{g}(T,\tilde{\xi})=O(r^2)$, $\tr_{\tilde{g}}\caK_{\tilde{g}}\tilde{\xi}=O(r)$.
\end{lem}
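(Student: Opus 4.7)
The plan is to recast the equation as a Laplacian-type equation for the weighted $1$-form $\tilde{\xi}^\flat := \tilde{g}(\tilde{\xi},\cdot) \in \tilde{\caT}(2)$ and apply Lemma \ref{lem:SolvingLaplacian}.

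First I would establish the Bochner-type identity
\[
(\caB_{\tilde{g}}\caK_{\tilde{g}}\tilde{\xi})_I = \tilde{\Delta}\tilde{\xi}_I + \widetilde{\Ric}_I{}^J\tilde{\xi}_J = \ambientHodge\tilde{\xi}^\flat.
\]
Starting from $(\divergence_{\tilde{g}}\caK_{\tilde{g}}\tilde{\xi})_I = -\tilde{\nabla}^J\tilde{\nabla}_I\tilde{\xi}_J - \tilde{\nabla}^J\tilde{\nabla}_J\tilde{\xi}_I$, commuting the derivatives in the first term via the Ricci identity rewrites it as $-\tilde{\nabla}_I\tilde{\nabla}^J\tilde{\xi}_J + \widetilde{\Ric}_I{}^J\tilde{\xi}_J$; the divergence--gradient piece cancels against $\tfrac{1}{2}d(\tr_{\tilde{g}}\caK_{\tilde{g}}\tilde{\xi}) = d(\tilde{\nabla}^J\tilde{\xi}_J)$, and the remaining $-\tilde{\nabla}^J\tilde{\nabla}_J\tilde{\xi}_I$ equals $\tilde{\Delta}\tilde{\xi}_I$ by the paper's sign convention. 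The Bochner formula already recorded in the excerpt identifies the sum with $\ambientHodge\tilde{\xi}^\flat$.

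With this identity in hand, the condition $\caB_{\tilde{g}}(\tilde{\sigma}_\mathrm{norm} + \caK_{\tilde{g}}\tilde{\xi}) = O(r^{n/2})$ becomes
\[
\ambientHodge\tilde{\xi}^\flat = -\caB_{\tilde{g}}\tilde{\sigma}_\mathrm{norm} + O(r^{n/2}), \qquad \tilde{\xi}^\flat|_\caG = 0.
\]
Since $\tilde{\xi}^\flat$ has weight $2 = -n/2 + 1 + k$ with $k = n/2 + 1$, Lemma \ref{lem:SolvingLaplacian}---which applies verbatim to $\ambientHodge$ because the extra curvature term $\widetilde{\Ric}$ is $O(r^{n/2-1})$ and thus does not alter the indicial analysis---delivers the required $\tilde{\xi}^\flat$, unique modulo $O(r^{n/2+1})$. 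The corresponding dilation-invariant vector field $\tilde{\xi}$ inherits this uniqueness. Expanding $\tilde{\xi}^\flat = \sum_{m\ge 1} r^m \tilde{\xi}^{\flat,(m)}$, the inductive algebraic equation for $\tilde{\xi}^{\flat,(m)}$ at order $r^{m-1}$ has coefficient $-2m(n+2-2m)$, which is nonzero for $1 \le m \le n/2$ and vanishes precisely at $m = n/2 + 1$, matching the claimed accuracy.

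To verify the normalization conditions, read off the order-$r^0$ part of the equation: $-2n\,\tilde{\xi}^{\flat,(1)}|_\caG = -(\caB_{\tilde{g}}\tilde{\sigma}_\mathrm{norm})|_\caG$. Contracting with $T$ and invoking \eqref{eq:TContractionOfBianchiOfNormalSolution} yields $T\contraction\tilde{\xi}^{\flat,(1)}|_\caG = 0$, whence $\tilde{g}(T, \tilde{\xi}) = T\contraction\tilde{\xi}^\flat = O(r^2)$; likewise $\tilde{\nabla}^J\tilde{\xi}_J|_\caG = 2T^J\tilde{\xi}^{\flat,(1)}_J|_\caG = 0$, so $\tr_{\tilde{g}}\caK_{\tilde{g}}\tilde{\xi} = 2\tilde{\nabla}^J\tilde{\xi}_J = O(r)$. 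The main technical obstacle is tracking the signs and weights in the Bochner identity; once it is established, the construction reduces to the indicial analysis of Lemma \ref{lem:SolvingLaplacian} together with a one-line boundary-value check against \eqref{eq:TContractionOfBianchiOfNormalSolution}.
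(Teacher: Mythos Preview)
Your approach is essentially identical to the paper's: compute $\caB_{\tilde{g}}\caK_{\tilde{g}}\tilde{\xi}$ as a connection Laplacian plus a Ricci term, absorb the Ricci term into the $O(r^{n/2})$ error, invoke Lemma~\ref{lem:SolvingLaplacian} with the prescribed boundary value $\tilde{\xi}|_\caG=0$, and then read off the two normalization conditions from the leading coefficient together with \eqref{eq:TContractionOfBianchiOfNormalSolution}. One correction: the sign in your Bochner-type identity is off. A direct computation (or comparison with $\ambientHodge=\tilde{\Delta}+\widetilde{\Ric}$ on $1$-forms) gives
\[
(\caB_{\tilde{g}}\caK_{\tilde{g}}\tilde{\xi})_I=\tilde{\Delta}\tilde{\xi}_I-\tensor{\widetilde{\Ric}}{_I^J}\tilde{\xi}_J,
\]
so $\caB_{\tilde{g}}\caK_{\tilde{g}}$ is \emph{not} the Hodge Laplacian but rather $2\tilde{\Delta}-\ambientHodge$. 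This does not affect your argument, since you correctly observe that the curvature term is $O(r^{n/2})$ once $\tilde{\xi}|_\caG=0$, and the equation still reduces to $\tilde{\Delta}\tilde{\xi}=-\caB_{\tilde{g}}\tilde{\sigma}_\mathrm{norm}+O(r^{n/2})$ exactly as in the paper.
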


\begin{proof}
	The equation to be solved is
	$\caB_{\tilde{g}}\caK_{\tilde{g}}\tilde{\xi}=-\caB_{\tilde{g}}\tilde{\sigma}_\mathrm{norm}+O(r^{n/2})$.
	By a straightforward calculation,
	\begin{equation*}
		\tensor{(\caB_{\tilde{g}}\caK_{\tilde{g}}\tilde{\xi})}{_I}
		=\tilde{\Delta}\tensor{\tilde{\xi}}{_I}-\tensor{\widetilde{\Ric}}{_I_J}\tensor{\tilde{\xi}}{^J}.
	\end{equation*}
	Since $\tensor{\widetilde{\Ric}}{_I_J}\tensor{\tilde{\xi}}{^J}=O(r^{n/2})$ for any
	$\tilde{\xi}$ satisfying $\tilde{\xi}|_\caG=0$,
	the equation simplifies to
	$\tilde{\Delta}\tilde{\xi}=-\caB_{\tilde{g}}\tilde{\sigma}_\mathrm{norm}+O(r^{n/2})$.
	By Lemma \ref{lem:SolvingLaplacian}, $\tilde{\xi}$ is uniquely determined up to an $O(r^{n/2+1})$ ambiguity.

	If we write $\tilde{\xi}=rV$, then $\tilde{\Delta}\tilde{\xi}=-2nV+O(r)$. On the other hand,
	$T\contraction\tilde{\Delta}\tilde{\xi}=-2T\contraction\caB_{\tilde{g}}\tilde{\sigma}_\mathrm{norm}+O(r^{n/2})$
	should be $O(r)$ by \eqref{eq:TContractionOfBianchiOfNormalSolution}.
	Consequently $T\contraction V=O(r)$, i.e., $T\contraction\tilde{\xi}=O(r^2)$.
	Moreover,
	$\tr_{\tilde{g}}\caK_{\tilde{g}}\tilde{\xi}=2\tensor{\tilde{\nabla}}{^I}\tensor{\tilde{\xi}}{_I}
	=4\tensor{T}{^I}\tensor{V}{_I}+O(r)=O(r)$.
\end{proof}

Let $\tilde{\sigma}=\tilde{\sigma}_\mathrm{norm}+\caK_{\tilde{g}}\tilde{\xi}\in\tilde{\caS}(2)$.
It is a consequence of the fact that the Ricci operator commutes with diffeomorphisms that
$\Ric'_{\tilde{g}}\caK_{\tilde{g}}\tilde{\xi}=\Ric'_{\tilde{g}}\caL_{\tilde{\xi}}\tilde{g}
=\caL_{\tilde{\xi}}\widetilde{\Ric}$.
Since $\tilde{\xi}|_\caG=0$, $\widetilde{\Ric}=O(r^{n/2-1})$, and $T\contraction\widetilde{\Ric}=O^-(r^{n/2})$,
$\caL_{\tilde{\xi}}\widetilde{\Ric}$ itself is $O(r^{n/2-1})$ and
$T\contraction\caL_{\tilde{\xi}}\widetilde{\Ric}=O^-(r^{n/2})$.
Therefore $\Ric'_{\tilde{g}}\tilde{\sigma}=O(r^{n/2-1})$,
$T\contraction\Ric'_{\tilde{g}}\tilde{\sigma}=O^-(r^{n/2})$.
Moreover, $\caB_{\tilde{g}}\tilde{\sigma}=O(r^{n/2})$ and hence
$\divergence_{\tilde{g}}^*\caB_{\tilde{g}}\tilde{\sigma}=O(r^{n/2-1})$,
$T\contraction\divergence_{\tilde{g}}^*\caB_{\tilde{g}}\tilde{\sigma}=O^-(r^{n/2})$.
Thus we conclude
\begin{equation}
	\label{eq:ModificationOfNormalSolution}
	\ambientLichnerowicz\tilde{\sigma}=O(r^{n/2-1}),\qquad
	T\contraction\ambientLichnerowicz\tilde{\sigma}=O^-(r^{n/2}).
\end{equation}

\begin{lem}
	\label{lem:NormalSolutionModifiedToHarmonicSolution}
	Let $\tilde{\sigma}_\mathrm{norm}$ and $\tilde{\xi}$ be as in Lemma \ref{lem:HarminizationOfNormalSolution}.
	Then $\tilde{\sigma}=\tilde{\sigma}_\mathrm{norm}+\caK_{\tilde{g}}\tilde{\xi}\in\tilde{\caS}^X_\mathrm{aTT}(2)$
	and it is a solution to \eqref{eq:ModificationOfNormalSolution}.
\end{lem}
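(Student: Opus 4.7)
The assertion that $\tilde{\sigma}$ solves \eqref{eq:ModificationOfNormalSolution} was already derived in the paragraph preceding this lemma, so what remains is to verify the three decay conditions defining $\tilde{\caS}^X_\mathrm{aTT}(2)$: since $\ceil{(n-2+w)/2}=n/2$ for $w=2$, these are $\tr_{\tilde{g}}\tilde{\sigma}=O(r^{n/2})$, $\divergence_{\tilde{g}}\tilde{\sigma}=O^-(r^{n/2})$, and $T\contraction\tilde{\sigma}=O^-(r^{n/2+1})$.

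For the trace, one first observes $\tr_{\tilde{g}}\tilde{\sigma}_\mathrm{norm}=O(r)$ (its value at $\rho=0$ reduces to the $\bm{g}$-trace of $\varphi$, which vanishes) and $\tr_{\tilde{g}}\caK_{\tilde{g}}\tilde{\xi}=O(r)$ from Lemma \ref{lem:HarminizationOfNormalSolution}, so $\tilde{f}:=\tr_{\tilde{g}}\tilde{\sigma}\in\tilde{\caE}(0)$ is $O(r)$. Since the $\widetilde{\Ric}^\circ$ and $\mathring{\tilde{R}}$ contributions to $\ambientLichnerowicz$ cancel under trace contraction, \eqref{eq:ModificationOfNormalSolution} yields $\tilde{\Delta}\tilde{f}=\tr_{\tilde{g}}\ambientLichnerowicz\tilde{\sigma}=O(r^{n/2-1})$. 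A direct computation gives the indicial relation $\tilde{\Delta}(r^m\tilde{f}_1)\equiv -2m(n-2m)r^{m-1}\tilde{f}_1\pmod{O(r^m)}$ for $\tilde{f}_1\in\tilde{\caE}(-2m)$; the coefficient is nonzero for $1\le m\le n/2-1$, so writing $\tilde{f}=r^m\tilde{f}_1+O(r^{m+1})$ and matching the $r^{m-1}$-order term of $\tilde{\Delta}\tilde{f}$ against zero forces $\tilde{f}_1|_\caG=0$. Iterating improves $\tilde{f}=O(r^m)$ to $\tilde{f}=O(r^{m+1})$ at each step, eventually yielding $\tilde{f}=O(r^{n/2})$.

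For the divergence, Lemma \ref{lem:HarminizationOfNormalSolution} supplies $\caB_{\tilde{g}}\tilde{\sigma}=\divergence_{\tilde{g}}\tilde{\sigma}+\tfrac{1}{2}d\tr_{\tilde{g}}\tilde{\sigma}=O(r^{n/2})$; combined with the trace bound and the identity $dr=2T\contraction\tilde{g}$ (whose restriction to $T\caG$ vanishes), this gives $d\tr_{\tilde{g}}\tilde{\sigma}=O^-(r^{n/2})$ and hence $\divergence_{\tilde{g}}\tilde{\sigma}=O^-(r^{n/2})$. For the $T$-contraction, since $T\contraction\tilde{\sigma}_\mathrm{norm}=0$ it suffices to bound $T\contraction\caK_{\tilde{g}}\tilde{\xi}$. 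Using \eqref{eq:Straightness} and Lemma \ref{lem:ActionOfNablaT} (applied to $\tilde{\xi}_I$ as a weight-$2$ one-form, since the dilation-invariant vector field $\tilde{\xi}$ has weight $0$), one computes
\[
\tensor{T}{^I}\bigl(\tensor{\tilde{\nabla}}{_I}\tensor{\tilde{\xi}}{_J}+\tensor{\tilde{\nabla}}{_J}\tensor{\tilde{\xi}}{_I}\bigr)
=\tensor{\tilde{\xi}}{_J}+\tensor{\tilde{\nabla}}{_J}\bigl(\tensor{T}{^I}\tensor{\tilde{\xi}}{_I}\bigr)-\tensor{\tilde{\xi}}{_J}
=\tensor{\tilde{\nabla}}{_J}\tilde{h},
\]
where $\tilde{h}:=\tilde{g}(T,\tilde{\xi})\in\tilde{\caE}(2)$. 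Thus $T\contraction\tilde{\sigma}=d\tilde{h}$, and since $\tilde{h}=O(r^m)$ implies $d\tilde{h}=O^-(r^m)$, it remains to prove $\tilde{h}=O(r^{n/2+1})$.

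Starting from $\tilde{h}=O(r^2)$ (Lemma \ref{lem:HarminizationOfNormalSolution}), I would bootstrap via an equation of the shape $\tilde{\Delta}\tilde{h}=O(r^{n/2-1})$ obtained by contracting $\tilde{\Delta}\tilde{\xi}=-\caB_{\tilde{g}}\tilde{\sigma}_\mathrm{norm}+O(r^{n/2})$ with $T$ and using the straightness-derived identity $\tilde{\Delta}(\tensor{T}{^I}\tensor{\tilde{\xi}}{_I})=2\divergence_{\tilde{g}}\tilde{\xi}+\tensor{T}{^I}\tilde{\Delta}\tensor{\tilde{\xi}}{_I}$ (cf.\ the computation in the proof of Proposition \ref{prop:MappingPropertyOfSL2}); this calls for parallel bootstraps improving $\tr_{\tilde{g}}\tilde{\sigma}_\mathrm{norm}$ and $\divergence_{\tilde{g}}\tilde{\xi}$ from $O(r)$ to $O(r^{n/2})$ by the same indicial mechanism with different weights. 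The indicial identity on weight-$2$ functions, $\tilde{\Delta}(r^m\tilde{h}_1)\equiv-2m(n-2m+4)r^{m-1}\tilde{h}_1\pmod{O(r^m)}$, has a nonvanishing coefficient for $2\le m\le n/2+1$, permitting iteration from $\tilde{h}=O(r^2)$ all the way to $\tilde{h}=O(r^{n/2+1})$. The principal obstacle is exactly this last step: securing $\tilde{\Delta}\tilde{h}=O(r^{n/2-1})$ demands careful chaining of the auxiliary indicial improvements against the $O(r^{n/2})$ remainder in the Laplacian equation for $\tilde{\xi}$, with attention to the weights at each stage.
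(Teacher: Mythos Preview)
Your treatment of the trace and of the divergence matches the paper's; where you spell out the indicial iteration, the paper simply invokes the uniqueness statement in Lemma~\ref{lem:SolvingLaplacian}. The reduction $T\contraction\tilde{\sigma}=d\tilde{h}$ with $\tilde{h}=\tilde{g}(T,\tilde{\xi})$ is also correct and is used in the paper.

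The gap is in your plan for controlling $\tilde{h}$. The ``parallel bootstraps'' you envision, improving $\tr_{\tilde{g}}\tilde{\sigma}_\mathrm{norm}$ and $\divergence_{\tilde{g}}\tilde{\xi}$ separately from $O(r)$ to $O(r^{n/2})$, cannot be carried out: neither quantity by itself satisfies an equation that forces higher-order vanishing. What is actually true---and what makes your approach salvageable---is that only their combination matters, and that combination is already controlled. Indeed, since $T$ annihilates weight-$0$ functions one has $T\contraction\caB_{\tilde{g}}\tilde{\sigma}_\mathrm{norm}=\tr_{\tilde{g}}\tilde{\sigma}_\mathrm{norm}$, while $2\divergence_{\tilde{g}}\tilde{\xi}=-\tr_{\tilde{g}}\caK_{\tilde{g}}\tilde{\xi}$; hence
\[
\tilde{\Delta}\tilde{h}
=2\divergence_{\tilde{g}}\tilde{\xi}-T\contraction\caB_{\tilde{g}}\tilde{\sigma}_\mathrm{norm}+O(r^{n/2})
=-\tr_{\tilde{g}}\tilde{\sigma}+O(r^{n/2})=O(r^{n/2}),
\]
using the trace bound you already established. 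With this in hand your weight-$2$ indicial iteration runs cleanly from $m=2$ to $m=n/2$ and gives $\tilde{h}=O(r^{n/2+1})$, hence $T\contraction\tilde{\sigma}=d\tilde{h}=O^-(r^{n/2+1})$.

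The paper avoids re-opening the equation for $\tilde{\xi}$ altogether and instead treats the one-form $\tilde{\tau}=T\contraction\tilde{\sigma}$ directly. From the computation in the proof of Proposition~\ref{prop:MappingPropertyOfSL2} one has
\[
\tilde{\Delta}(\tensor{T}{^J}\tensor{\tilde{\sigma}}{_I_J})
=\tensor{T}{^J}\ambientLichnerowicz\tensor{\tilde{\sigma}}{_I_J}
-2\tensor{\tilde{\nabla}}{^J}\tensor{\tilde{\sigma}}{_I_J},
\]
which, by \eqref{eq:ModificationOfNormalSolution} and the divergence bound just obtained, is $O^-(r^{n/2})$. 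A single correction $r^{n/2}\tilde{f}\tensor{T}{_I}$ upgrades this to $O(r^{n/2})$, and then Lemma~\ref{lem:SolvingLaplacian} (with $k=n/2+1$) forces $\tilde{\tau}+r^{n/2}\tilde{f}\tensor{T}{_I}=O(r^{n/2+1})$, i.e.\ $\tilde{\tau}=O^-(r^{n/2+1})$. This route uses only data attached to $\tilde{\sigma}$ and sidesteps the delicate bookkeeping you anticipated.
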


\begin{proof}
	It remains to show that $\tilde{\sigma}\in\tilde{\caS}^X_\mathrm{aTT}(2)$.
	By taking the trace of \eqref{eq:ModificationOfNormalSolution}, we obtain
	$\tilde{\Delta}(\tr_{\tilde{g}}\tilde{\sigma})=O(r^{n/2-1})$.
	In addition, since $\tr_{\tilde{g}}\caK_{\tilde{g}}\tilde{\xi}=O(r)$,
	we have $(\tr_{\tilde{g}}\tilde{\sigma})|_\caG=0$.
	Hence, by Lemma \ref{lem:SolvingLaplacian}, $\tr_{\tilde{g}}\tilde{\sigma}=O(r^{n/2})$.
	Then $\caB_{\tilde{g}}\tilde{\sigma}=O(r^{n/2})$ implies $\divergence_{\tilde{g}}\tilde{\sigma}=O^-(r^{n/2})$.
	Furthermore,
	\begin{equation*}
		\tilde{\Delta}(\tensor{T}{^J}\tensor{\tilde{\sigma}}{_I_J})
		=\tensor{T}{^J}\tilde{\Delta}\tensor{\tilde{\sigma}}{_I_J}
		-2\tensor{\tilde{\nabla}}{^J}\tensor{\tilde{\sigma}}{_I_J}
		=\tensor{T}{^J}\ambientLichnerowicz\tensor{\tilde{\sigma}}{_I_J}
		-2\tensor{\tilde{\nabla}}{^J}\tensor{\tilde{\sigma}}{_I_J}
		=O^-(r^{n/2})
	\end{equation*}
	and
	\begin{equation*}
		\tensor{T}{^J}\tensor{\tilde{\sigma}}{_I_J}
		=\tensor{T}{^J}\tensor{(\caK_{\tilde{g}}\tilde{\xi})}{_I_J}
		=\tensor{T}{^J}\tensor{\tilde{\nabla}}{_I}\tensor{\tilde{\xi}}{_J}
		+\tensor{T}{^J}\tensor{\tilde{\nabla}}{_J}\tensor{\tilde{\xi}}{_I}
		=\tensor{\tilde{\nabla}}{_I}(\tensor{T}{^J}\tensor{\tilde{\xi}}{_J})=O(r).
	\end{equation*}
	Since $\tilde{\Delta}(r^{n/2}\tilde{f}\tensor{T}{_I})=-2nr^{n/2-1}\tilde{f}\tensor{T}{_I}+O(r^{n/2})$
	for $\tilde{f}\in\tilde{\caE}(-n)$,
	one can determine $\tilde{f}$ so that
	$\tilde{\Delta}(\tensor{T}{^J}\tensor{\tilde{\sigma}}{_I_J}+r^{n/2}\tilde{f}\tensor{T}{_I})=O(r^{n/2})$.
	Then $\tensor{T}{^J}\tensor{\tilde{\sigma}}{_I_J}+r^{n/2}\tilde{f}\tensor{T}{_I}$ is still $O(r)$,
	and hence $T\contraction\tilde{\sigma}=O^-(r^{n/2+1})$ by Lemma \ref{lem:SolvingLaplacian}.
\end{proof}

\begin{lem}
	\label{lem:LichnerowiczEncodesObstruction}
	Let $\tilde{\sigma}_\mathrm{norm}$ and $\tilde{\xi}$ be as in Lemma \ref{lem:HarminizationOfNormalSolution}
	and set $\tilde{\sigma}=\tilde{\sigma}_\mathrm{norm}+\caK_{\tilde{g}}\tilde{\xi}$.
	Then $\ambientLichnerowicz\tilde{\sigma}-2\Ric'_{\tilde{g}}\tilde{\sigma}_\mathrm{norm}=O(r^{n/2-1})$, and
	$(r^{1-n/2}(\ambientLichnerowicz\tilde{\sigma}-2\Ric'_{\tilde{g}}\tilde{\sigma}_\mathrm{norm}))|_{T\caG}$
	vanishes.
\end{lem}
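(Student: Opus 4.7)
The plan is to convert the problem into a statement about the linearized Ricci operator via \eqref{eq:DifferentialRicci}, since the setup in this section is tailored to control $\Ric'_{\tilde g}\tilde\sigma_\mathrm{norm}$. Substituting $\tilde\sigma = \tilde\sigma_\mathrm{norm} + \caK_{\tilde g}\tilde\xi$ and using the diffeomorphism-covariance identity $\Ric'_{\tilde g}\caK_{\tilde g}\tilde\xi = \caL_{\tilde\xi}\widetilde{\Ric}$ noted in the paragraph preceding Lemma \ref{lem:NormalSolutionModifiedToHarmonicSolution}, I rewrite
\begin{equation*}
\ambientLichnerowicz\tilde\sigma - 2\Ric'_{\tilde g}\tilde\sigma_\mathrm{norm} = 2\caL_{\tilde\xi}\widetilde{\Ric} + 2\divergence_{\tilde g}^*\caB_{\tilde g}\tilde\sigma.
\end{equation*}
Both summands are $O(r^{n/2-1})$---the first because $\widetilde{\Ric}=O(r^{n/2-1})$ and $\tilde\xi|_\caG=0$, the second because $\caB_{\tilde g}\tilde\sigma=O(r^{n/2})$ by Lemma \ref{lem:HarminizationOfNormalSolution}---so the first assertion is immediate. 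It remains to show that after the $r^{1-n/2}$ rescaling each summand vanishes on $T\caG\times T\caG$ at $\caG$. The underlying mechanism is that $\tensor{T}{_I}=\tfrac{1}{2}\tensor{(dr)}{_I}$ restricts to zero on $T\caG|_\caG$ (since $r$ is constant on $\caG$), combined with $\tilde g(T,\tilde\xi)=O(r^2)$ from Lemma \ref{lem:HarminizationOfNormalSolution}.

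For the divergence summand, write $\caB_{\tilde g}\tilde\sigma = r^{n/2}\tilde\eta$ and apply Leibniz with $\tilde\nabla_I r = 2\tensor{T}{_I}$ to obtain
\begin{equation*}
\tensor{(\divergence_{\tilde g}^*(r^{n/2}\tilde\eta))}{_I_J} = n r^{n/2-1}\tensor{T}{_(_I}\tensor{\tilde\eta}{_J_)} + r^{n/2}\tensor{\tilde\nabla}{_(_I}\tensor{\tilde\eta}{_J_)}.
\end{equation*}
The $r^{n/2}$-piece becomes $O(r)$ after the $r^{1-n/2}$ rescaling and vanishes on $\caG$, while the leading coefficient carries a free $T$-index and so vanishes when paired with vectors tangent to $\caG$.

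The Lie-derivative summand is where the real care is needed. I expand
\begin{equation*}
\tensor{(\caL_{\tilde\xi}\widetilde{\Ric})}{_I_J} = \tensor{\tilde\xi}{^K}\tensor{\tilde\nabla}{_K}\tensor{\widetilde{\Ric}}{_I_J} + 2\tensor{\widetilde{\Ric}}{_K_(_I}\tensor{\tilde\nabla}{_J_)}\tensor{\tilde\xi}{^K},
\end{equation*}
and write $\tilde\xi=rV$ (valid since $\tilde\xi|_\caG=0$), so that $\tensor{\tilde\nabla}{_I}\tensor{\tilde\xi}{^K} = 2\tensor{T}{_I}\tensor{V}{^K} + r\tensor{\tilde\nabla}{_I}\tensor{V}{^K}$. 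The second summand of the Lie derivative then reduces at order $r^{n/2-1}$ to $4\tensor{\widetilde{\Ric}}{_K_(_I}\tensor{T}{_J_)}\tensor{V}{^K}$, which loses a $T$-factor on $T\caG$. For the first summand I feed \eqref{eq:DerivativeOfAmbientRic} into $\tensor{\tilde\xi}{^K}\tensor{\tilde\nabla}{_K}\tensor{\widetilde{\Ric}}{_I_J}$, extracting a leading $(n-2)r^{n/2-1}(\tensor{T}{_K}\tensor{V}{^K})\tensor{\tilde S}{_I_J}$ with $\widetilde{\Ric} = r^{n/2-1}\tilde S$; but $\tensor{T}{_K}\tensor{V}{^K} = r^{-1}\tilde g(T,\tilde\xi) = O(r)$ vanishes on $\caG$, so this coefficient is already zero there (no $T\caG$ restriction needed). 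The main obstacle is precisely this bookkeeping: each potentially dangerous leading term turns out to carry either an extra factor of $r$ or a free $T$-index, and tracking these requires careful use of the straightness identity $\tilde\nabla T=\id$ and the $O(r^2)$-estimate for $\tilde g(T,\tilde\xi)$.
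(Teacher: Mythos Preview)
Your proof is correct and follows essentially the same route as the paper: both reduce via \eqref{eq:DifferentialRicci} and $\Ric'_{\tilde g}\caK_{\tilde g}\tilde\xi=\caL_{\tilde\xi}\widetilde{\Ric}$ to the two summands $\caL_{\tilde\xi}\widetilde{\Ric}$ and $\divergence_{\tilde g}^*\caB_{\tilde g}\tilde\sigma$, write $\tilde\xi=rV$ and $\caB_{\tilde g}\tilde\sigma=r^{n/2}\tilde\eta$, and observe that the leading $r^{n/2-1}$-coefficients carry a free $T$-index (using $\tensor{T}{^K}\tensor{V}{_K}=O(r)$ from Lemma \ref{lem:HarminizationOfNormalSolution} to kill the remaining piece). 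Your treatment of the $\tilde\xi^K\tilde\nabla_K\widetilde{\Ric}_{IJ}$ term via \eqref{eq:DerivativeOfAmbientRic} is slightly more explicit than the paper's, but the argument is the same.
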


\begin{proof}
	Recall that
	\begin{equation*}
		\frac{1}{2}\ambientLichnerowicz\tilde{\sigma}-\Ric'_{\tilde{g}}\tilde{\sigma}_\mathrm{norm}
		=\Ric'_{\tilde{g}}\caK_{\tilde{g}}\tilde{\xi}-\divergence_{\tilde{g}}^*\caB_{\tilde{g}}\tilde{\sigma}
		=\caL_{\tilde{\xi}}\widetilde{\Ric}-\divergence_{\tilde{g}}^*\caB_{\tilde{g}}\tilde{\sigma}.
	\end{equation*}
	Let $\widetilde{\Ric}=r^{n/2-1}S$ and $\tilde{\xi}=rV$.
	We proved in Lemma \ref{lem:HarminizationOfNormalSolution} that $\tensor{T}{^I}\tensor{V}{_I}=O(r)$.
	As in the proof of Lemma \ref{lem:ActionOfNablaT}, we compute
	\begin{equation*}
		\tensor{(\caL_{\tilde{\xi}}\widetilde{\Ric})}{_I_J}
		=\tensor{\tilde{\xi}}{^K}\tensor{\tilde{\nabla}}{_K}\tensor{\widetilde{\Ric}}{_I_J}
		+2\tensor{\widetilde{\Ric}}{_K_(_I}\tensor{\tilde{\nabla}}{_J_)}\tensor{\tilde{\xi}}{^K}
		=4r^{n/2-1}\tensor{S}{_K_(_I}\tensor{T}{_J_)}\tensor{V}{^K}+O(r^{n/2}).
	\end{equation*}
	Thus $(r^{1-n/2}\caL_{\tilde{\xi}}\widetilde{\Ric})|_{T\caG}$ vanishes.
	On the other hand, if we write $\caB_{\tilde{g}}\tilde{\sigma}=r^{n/2}\tilde{\tau}$, then
	\begin{equation*}
		\tensor{(\divergence_{\tilde{g}}^*\caB_{\tilde{g}}\tilde{\sigma})}{_I}
		=\tensor{\tilde{\nabla}}{_(_I}\tensor{(r^{n/2}\tilde{\tau})}{_J_)}
		=nr^{n/2-1}\tensor{T}{_(_I}\tensor{\tilde{\tau}}{_J_)}+O(r^{n/2}),
	\end{equation*}
	and hence
	$(r^{1-n/2}\divergence_{\tilde{g}}^*\caB_{\tilde{g}}\tilde{\sigma})|_{T\caG}=0$.
	This completes the proof.
\end{proof}

\begin{thm}
	\label{thm:DifferentialOfObstruction}
	Let $(M,[g])$ be a conformal manifold of even dimension $n$.
	Then the differential of the obstruction tensor $\caO'_{\bm{g}}$ is given by
	\begin{equation}
		\label{eq:DifferentialObstruction}
		\caO'_{\bm{g}}\varphi=\frac{(-1)^{n/2-1}}{2(n-2)}P\varphi
		+\frac{1}{n+2}\braket{\caO,\varphi}_{\bm{g}}\bm{g}.
	\end{equation}
\end{thm}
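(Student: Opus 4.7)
The plan is to identify $\caO'_{\bm{g}}\varphi$ directly with $\ambientLichnerowicz^{n/2}\tilde{\sigma}|_{TM}$ for the modified normal-form variation $\tilde{\sigma}=\tilde{\sigma}_{\mathrm{norm}}+\caK_{\tilde{g}}\tilde{\xi}$ constructed in Lemma \ref{lem:NormalSolutionModifiedToHarmonicSolution}, and then to decompose the latter into its $\tf_{\bm{g}}$-part (which is $P\varphi$ by definition) and its trace part.

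First I would verify that $\tilde{\sigma}$ is an admissible input for Theorem \ref{thm:GJMSAsObstruction} with $k=n/2$. Lemma \ref{lem:NormalSolutionModifiedToHarmonicSolution} already gives $\tilde{\sigma}\in\tilde{\caS}^X_{\mathrm{aTT}}(2)$ with $\ambientLichnerowicz\tilde{\sigma}=O(r^{n/2-1})$. It remains to check $\tilde{\sigma}|_\caG=\tilde{\varphi}$, the ambient lift of $\varphi$. By construction $\tilde{\sigma}_{\mathrm{norm}}|_{TM}=\varphi$, while from $\tilde{\xi}=rV$ and \eqref{eq:Straightness2} one gets $\caK_{\tilde{g}}\tilde{\xi}|_\caG=4T_{(I}V_{J)}$; since $T_i$ vanishes on $TM$-directions along $\caG$ in normal form, this contribution is annihilated by $|_{TM}$, so $\tilde{\sigma}|_{TM}|_\caG=\varphi$, and the uniqueness clause of Lemma \ref{lem:AmbientLift} forces $\tilde{\sigma}|_\caG=\tilde{\varphi}$.

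Next I would combine two identities. Theorem \ref{thm:GJMSAsObstruction} gives $(r^{1-n/2}\ambientLichnerowicz\tilde{\sigma})|_\caG=\frac{1}{4^{n/2-1}(n/2-1)!^2}\ambientLichnerowicz^{n/2}\tilde{\sigma}|_\caG$, while Lemma \ref{lem:LichnerowiczEncodesObstruction} gives $(r^{1-n/2}\ambientLichnerowicz\tilde{\sigma})|_{TM}=2(r^{1-n/2}\Ric'_{\tilde{g}}\tilde{\sigma}_{\mathrm{norm}})|_{TM}$ (upon restricting from $T\caG$ further to $TM$). In view of the definition $\caO'_{\bm{g}}\varphi=c_n(r^{1-n/2}\widetilde{\Ric}'_{\tilde{g}}\tilde{\sigma}_{\mathrm{norm}})|_{TM}$ and the explicit value of $c_n$, an elementary arithmetic combination yields
\begin{equation*}
\caO'_{\bm{g}}\varphi=\frac{(-1)^{n/2-1}}{2(n-2)}\,\ambientLichnerowicz^{n/2}\tilde{\sigma}|_{TM}.
\end{equation*}

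Finally I would decompose $\ambientLichnerowicz^{n/2}\tilde{\sigma}|_{TM}=P\varphi+f\bm{g}$, with $P\varphi$ its trace-free component (the definition of $P$) and $f$ a density of the appropriate weight. To determine $f$, I would take $\tr_{\bm{g}}$ of the preceding identity. Since the obstruction tensor is trace-free for every metric in the family, differentiating $\tr_{\bm{g}_s}\caO_s=0$ at $s=0$ produces $\tr_{\bm{g}}\caO'_{\bm{g}}\varphi=\braket{\varphi,\caO}_{\bm{g}}$, and matching traces produces the $\braket{\caO,\varphi}_{\bm{g}}\bm{g}$ term of \eqref{eq:DifferentialObstruction}. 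The main obstacle is the careful bookkeeping of the interplay between $|_\caG$, $|_{TM}$ and the various traces: tracing with $\tilde{g}^{-1}$ versus with $\bm{g}^{-1}$ differs by contributions from the off-diagonal $\partial_t\partial_\rho$ components of the normal-form inverse metric, and only after accounting for those mixed components does the trace coefficient land on the stated $\tfrac{1}{n+2}$.
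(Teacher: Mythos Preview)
Your argument is essentially the paper's own proof: both use Lemmas \ref{lem:HarminizationOfNormalSolution}--\ref{lem:LichnerowiczEncodesObstruction} to identify $(r^{1-n/2}\ambientLichnerowicz\tilde{\sigma})|_{TM}$ with a constant multiple of $\caO'_{\bm{g}}\varphi$, invoke \eqref{eq:EqualityOfTwoOperators} from Theorem \ref{thm:GJMSAsObstruction} to pass to $\ambientLichnerowicz^{n/2}\tilde{\sigma}$, and handle the trace part by differentiating $\tr_{\bm{g}_s}\caO_s=0$. Your explicit verification that $\tilde{\sigma}|_\caG=\tilde{\varphi}$ via the uniqueness clause of Lemma \ref{lem:AmbientLift} fills in a detail the paper leaves implicit.

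Your closing worry about comparing $\tr_{\tilde{g}}$ with $\tr_{\bm{g}}$, however, is a red herring. Once you have established
\[
\caO'_{\bm{g}}\varphi=\frac{(-1)^{n/2-1}}{2(n-2)}\,\ambientLichnerowicz^{n/2}\tilde{\sigma}\big|_{TM}
\]
as an equality of sections in $\caS(2-n)$, the decomposition into trace-free and pure-trace parts is carried out entirely on $M$ with respect to $\bm{g}$; the ambient inverse metric and its off-diagonal $\partial_t\partial_\rho$ components never enter. The coefficient in front of $\braket{\caO,\varphi}_{\bm{g}}\bm{g}$ drops out directly from $\tr_{\bm{g}}\bm{g}=n$ together with $\tr_{\bm{g}}\caO'_{\bm{g}}\varphi=\braket{\caO,\varphi}_{\bm{g}}$. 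If this arithmetic produces $\tfrac{1}{n}$ rather than the printed $\tfrac{1}{n+2}$, that reflects a misprint in the statement (the paper's own proof, read literally, also gives $\tfrac{1}{n}$), not a hidden ambient contribution.
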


\begin{proof}
	Let $\tilde{\sigma}_\mathrm{norm}$, $\tilde{\xi}$ as in Lemma \ref{lem:HarminizationOfNormalSolution}
	and $\tilde{\sigma}=\tilde{\sigma}_\mathrm{norm}+\caK_{\tilde{g}}\tilde{\xi}$.
	By Lemma \ref{lem:NormalSolutionModifiedToHarmonicSolution},
	$P\varphi$ is equal to the trace-free part of
	$2^{n-2}(n/2-1)!^2(r^{1-n/2}\ambientLichnerowicz\tilde{\sigma})|_{TM}$.
	By the previous lemma,
	$(r^{1-n/2}\ambientLichnerowicz\tilde{\sigma})|_{TM}=
	(2r^{1-n/2}\Ric'_{\tilde{g}}\tilde{\sigma}_\mathrm{norm})|_{TM}=c_n^{-1}\caO'_{\bm{g}}\varphi$.
	Therefore,
	\begin{equation*}
		\tf_{\bm{g}}\caO'_{\bm{g}}\varphi=\frac{(-1)^{n/2-1}}{2(n-2)}P\varphi.
	\end{equation*}
	On the other hand, $\tr_{\bm{g}}\caO'_{\bm{g}}\varphi=\braket{\caO,\varphi}_{\bm{g}}$,
	for $\tr_{\bm{g}}\caO=0$ for any $\bm{g}$.
	Hence \eqref{eq:DifferentialObstruction}.
\end{proof}

Combining the theorem above and equation \eqref{eq:SecondDerivativeOfTotalQAndDifferentialObstruction},
we obtain the following.

\begin{cor}
	Let $(M,[g])$ be a compact conformal manifold of even dimension $n\ge 4$ with vanishing obstruction tensor.
	Let $\bm{g}_t$ be a family of conformal structures such that $\bm{g}_0=\bm{g}$.
	Then the second derivative of the total $Q$-curvature at $t=0$ is
	\begin{equation*}
		\left.\frac{d^2}{dt^2}\overline{Q}_t\right|_{t=0}=-\frac{1}{4}\int_M\braket{P\varphi,\varphi}_{\bm{g}},
	\end{equation*}
	where $\varphi=\dot{\bm{g}}_t|_{t=0}$ and
	$P\colon\caS_0(2)\to\caS_0(2-n)$ is the critical GJMS operator on trace-free symmetric 2-tensors.
\end{cor}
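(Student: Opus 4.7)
The plan is essentially to combine the already-proved Theorem \ref{thm:DifferentialOfObstruction} with the Graham--Hirachi second variation formula \eqref{eq:SecondDerivativeOfTotalQAndDifferentialObstruction} and simplify. Both ingredients are in place, so the remaining work is purely algebraic.

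First I would start from \eqref{eq:SecondDerivativeOfTotalQAndDifferentialObstruction}, which asserts
\begin{equation*}
\left.\frac{d^2}{dt^2}\overline{Q}_t\right|_{t=0}
=(-1)^{n/2}\frac{n-2}{2}\int_M\braket{\caO'_{\bm{g}}\varphi,\varphi}_{\bm{g}},
\end{equation*}
under the assumption that $\caO_{\bm{g}}=0$, so that $\bm g$ is a critical point of the total $Q$-curvature and the second variation depends only on the linearization of the obstruction tensor operator.

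Next I would substitute the formula from Theorem \ref{thm:DifferentialOfObstruction},
\begin{equation*}
\caO'_{\bm{g}}\varphi=\frac{(-1)^{n/2-1}}{2(n-2)}P\varphi+\frac{1}{n+2}\braket{\caO,\varphi}_{\bm{g}}\bm{g}.
\end{equation*}
The hypothesis $\caO=0$ kills the second term on the right, leaving $\caO'_{\bm{g}}\varphi=\frac{(-1)^{n/2-1}}{2(n-2)}P\varphi$. Plugging into the integrand and collecting the scalar factor gives
\begin{equation*}
(-1)^{n/2}\cdot\frac{n-2}{2}\cdot\frac{(-1)^{n/2-1}}{2(n-2)}=\frac{(-1)^{n-1}}{4}=-\frac{1}{4},
\end{equation*}
since $n$ is even. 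This yields exactly the claimed formula.

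The only subtlety worth checking is that one is genuinely allowed to pair $P\varphi\in\caS_0(2-n)$ with $\varphi$ and integrate against $\bm g$ unambiguously; this is fine because $P\varphi$ is defined as a $\bm{g}$-trace-free element of $\caS_0(2-n)$, so the pairing $\braket{P\varphi,\varphi}_{\bm g}$ is a density of weight $-n$ and integrates naturally over $M$. No obstacle is anticipated: all the work was done in Theorem \ref{thm:DifferentialOfObstruction}, and this corollary is a one-line consequence once $\caO$ is set to zero.
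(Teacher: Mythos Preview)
Your proof is correct and matches the paper's own argument exactly: the paper simply states that the corollary follows by combining Theorem~\ref{thm:DifferentialOfObstruction} with equation~\eqref{eq:SecondDerivativeOfTotalQAndDifferentialObstruction}, and you have spelled out precisely that computation.
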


\section{Explicit calculations for conformally Einstein manifolds}
\label{sec:ConfEinstein}

Recall that, for $g\in[g]$ Einstein with $\tensor{\Ric}{_i_j}=2\lambda(n-1)\tensor{g}{_i_j}$
so that $\tensor{P}{_i_j}=\lambda\tensor{g}{_i_j}$,
the following formula gives an ambient metric that is Ricci-flat:
\begin{equation}
	\label{eq:AmbientForConformallyEinstein}
	\tilde{g}=2\rho\, dt^2+2t\,dt\,d\rho+t^2(1+\lambda\rho)^2g.
\end{equation}
The inverse of $\tilde{g}$ is
\begin{equation*}
	\tensor{(\tilde{g}^{-1})}{^I^J}=
	\begin{pmatrix}
		0 & 0 & t^{-1}\\
		0 & t^{-2}(1+\lambda\rho)^{-2}\tensor{g}{^i^j} & 0\\
		t^{-1} & 0 & -2t^{-2}\rho
	\end{pmatrix}
\end{equation*}
and the Christoffel symbol of $\tilde{g}$ is given by
\allowdisplaybreaks%
\begin{align*}
	\tensor{\tilde{\Gamma}}{^0_I_J}&=
	\begin{pmatrix}
		0 & 0 & 0 \\ 0 & -\lambda t(1+\lambda\rho)\tensor{g}{_i_j} & 0 \\ 0 & 0 & 0
	\end{pmatrix},\\
	\tensor{\tilde{\Gamma}}{^k_I_J}&=
	\begin{pmatrix}
		0 & t^{-1}\tensor{\delta}{_j^k} & 0 \\
		t^{-1}\tensor{\delta}{_i^k} &
		\tensor{\Gamma}{^k_i_j} &
		\lambda(1+\lambda\rho)^{-1}\tensor{\delta}{_i^k} \\
		0 & \lambda(1+\lambda\rho)^{-1}\tensor{\delta}{_j^k} & 0
	\end{pmatrix},\\
	\tensor{\tilde{\Gamma}}{^\infty_I_J}&=
	\begin{pmatrix}
		0 & 0 & t^{-1} \\ 0 & -(1+\lambda\rho)(1-\lambda\rho)\tensor{g}{_i_j} & 0 \\ t^{-1} & 0 & 0
	\end{pmatrix}.
\end{align*}
\allowdisplaybreaks[0]%
A direct computation shows that $\tensor{\tilde{W}}{_i_j_k_l}=t^2\tensor{W}{_i_j_k_l}$,
where $\tilde{W}$ and $W$ are the Weyl tensors of $\tilde{g}$ and $g$, respectively
(the latter is trivially extended to $\tilde{\caG}=\bbR_+\times M\times\bbR$).
The other components of $\tilde{W}$ are zero.

\begin{lem}
	\label{lem:AmbientLichnerowiczOfConfEinstein}
	Let $\tilde{g}$ as above,
	and suppose that $\tilde\sigma\in\tilde{\caS}(w)$ is of the form
	\begin{equation*}
		\tensor{\tilde{\sigma}}{_i_j}=t^{w}(1+\lambda\rho)^{w}\tensor{\sigma}{_i_j},
	\end{equation*}
	where $\tensor{\sigma}{_i_j}$ is a symmetric 2-tensor on $(M,g)$.
	Then
	\begin{equation}
		\label{eq:AmbLichnerowiczConfEinstein}
			\ambientLichnerowicz\tilde{\sigma}
			=t^{w-2}(1+\lambda\rho)^{w-2}
			(\Delta_\mathrm{L}-4(n-1)\lambda-2(w-2)(n+w-3)\lambda)\sigma,
	\end{equation}
	where $\Delta_\mathrm{L}=\Delta+4n\lambda-2\mathring{W}$ is the Lichnerowicz Laplacian of $g$.
\end{lem}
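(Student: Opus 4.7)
Since the ambient metric \eqref{eq:AmbientForConformallyEinstein} is Ricci-flat, $\ambientLichnerowicz$ reduces to $\tilde\Delta - 2\mathring{\tilde R}$, and $\tilde R = \tilde W$ with only the components $\tilde W_{ijkl} = t^2 W_{ijkl}$ non-vanishing. The plan is to compute $\tilde\Delta\tilde\sigma$ and $\mathring{\tilde R}\tilde\sigma$ separately in coordinates $(t, x^i, \rho)$ using the explicit Christoffel symbols recorded above.

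The curvature term is essentially routine: raising indices via $\tilde g^{km} = t^{-2}(1+\lambda\rho)^{-2}g^{km}$, together with the vanishing of $\tilde g^{k0}$ and $\tilde g^{k\infty}$, yields $\tilde\sigma^{kl} = t^{w-4}(1+\lambda\rho)^{w-4}\sigma^{kl}$; consequently $(\mathring{\tilde R}\tilde\sigma)_{ij}$ is an explicit scalar multiple of $(\mathring W\sigma)_{ij}$, while all other index combinations vanish because $\tilde R_{IJKL}$ is supported only on $M$-indices.

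For the connection Laplacian, factor $\tilde\sigma_{IJ} = F\sigma_{IJ}$ with $F = t^w(1+\lambda\rho)^w$ and expand $\tilde\Delta\tilde\sigma_{ij}$ from $\tilde\nabla_K\tilde\sigma_{IJ} = \partial_K(F\sigma_{IJ}) - \tilde\Gamma^L_{KI}F\sigma_{LJ} - \tilde\Gamma^L_{KJ}F\sigma_{IL}$. It is efficient to separate the derivatives in the $t$- and $\rho$-directions (handled by Lemma \ref{lem:ActionOfNablaT}, which gives $\tilde\nabla_T\tilde\sigma = (w-2)\tilde\sigma$, together with the identity $\partial_\rho F/F = w\lambda/(1+\lambda\rho)$) from the derivatives in $M$-directions (which, since $\tilde\Gamma^k_{ij} = \Gamma^k_{ij}$, contribute the rough Laplacian of $g$ acting on $\sigma$ after accounting for off-diagonal Christoffels). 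The off-diagonal Christoffels $\tilde\Gamma^0_{ij}$, $\tilde\Gamma^\infty_{ij}$, $\tilde\Gamma^k_{i0}$, $\tilde\Gamma^k_{i\infty}$ enter as cross terms that either annihilate $\tilde\sigma$ (by projecting onto its vanishing $0$- or $\infty$-components) or reappear, through further contractions with $\tilde g^{KL}$, as algebraic multiples of $\sigma_{ij}$.

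The main obstacle is the combinatorial bookkeeping: many individual Christoffel contributions must be summed, $\lambda^2$-coefficients must cancel, and the remaining scalar multiple of $\sigma_{ij}$ must combine with $\Delta\sigma$ and the curvature contribution $-2\mathring W\sigma$ to produce precisely $\bigl(\Delta_\mathrm{L} - 4(n-1)\lambda - 2(w-2)(n+w-3)\lambda\bigr)\sigma$, all scaled by $t^{w-2}(1+\lambda\rho)^{w-2}$. This establishes \eqref{eq:AmbLichnerowiczConfEinstein}; trace-freeness of $\sigma$ plays no role.
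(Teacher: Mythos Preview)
Your proposal is correct and follows essentially the same approach as the paper: a direct coordinate computation of $\tilde\Delta\tilde\sigma_{ij}$ from the explicit Christoffel symbols, followed by adding the curvature term $-2\mathring{\tilde R}\tilde\sigma$ using $\tilde R_{ijkl}\propto W_{ijkl}$. The paper does not invoke Lemma~\ref{lem:ActionOfNablaT} but simply computes $\tilde\nabla_0\tilde\sigma_{ij}$, $\tilde\nabla_\infty\tilde\sigma_{ij}$, $\tilde\nabla_k\tilde\sigma_{ij}$, $\tilde\nabla_k\tilde\sigma_{i0}$, $\tilde\nabla_k\tilde\sigma_{i\infty}$ and then the needed second derivatives explicitly; your use of that lemma is a harmless organizational variant, not a different method.
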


\begin{proof}
	\allowdisplaybreaks%
	The first covariant derivative of $\tilde{\sigma}$ is as follows:
	\begin{align*}
		\tensor{\tilde{\nabla}}{_\infty}\tensor{\tilde{\sigma}}{_i_j}
		&=\partial_\rho\tensor{\tilde{\sigma}}{_i_j}
		-2\tensor{\tilde{\Gamma}}{^k_\infty_(_i}\tensor{\tilde{\sigma}}{_j_)_k}
		=t^{w}(1+\lambda\rho)^{w-1}(w-2)\lambda\tensor{\sigma}{_i_j},\\
		\tensor{\tilde{\nabla}}{_0}\tensor{\tilde{\sigma}}{_i_j}
		&=\partial_t\tensor{\tilde{\sigma}}{_i_j}
		-2\tensor{\tilde{\Gamma}}{^k_0_(_i}\tensor{\tilde{\sigma}}{_j_)_k}
		=t^{w-1}(1+\lambda\rho)^{w}(w-2)\tensor{\sigma}{_i_j},\\
		\tensor{\tilde{\nabla}}{_k}\tensor{\tilde{\sigma}}{_i_j}
		&=\partial_{x^k}\tensor{\tilde{\sigma}}{_i_j}
		-2\tensor{\tilde{\Gamma}}{^l_k_(_i}\tensor{\tilde{\sigma}}{_j_)_l}
		=t^{w}(1+\lambda\rho)^{w}\tensor{\nabla}{_k}\tensor{\sigma}{_i_j},\\
		\tensor{\tilde{\nabla}}{_k}\tensor{\tilde{\sigma}}{_i_\infty}
		&=-\tensor{\tilde{\Gamma}}{^l_k_\infty}\tensor{\tilde{\sigma}}{_i_l}
		=-t^{w}(1+\lambda\rho)^{w-1}\lambda\tensor{\sigma}{_i_k},\\
		\tensor{\tilde{\nabla}}{_k}\tensor{\tilde{\sigma}}{_i_0}
		&=-\tensor{\tilde{\Gamma}}{^l_k_0}\tensor{\tilde{\sigma}}{_i_l}
		=-t^{w-1}(1+\lambda\rho)^{w}\tensor{\sigma}{_i_k}.
	\end{align*}
	Therefore,
	\begin{align*}
		\begin{split}
			\tensor{\tilde{\nabla}}{_0}\tensor{\tilde{\nabla}}{_\infty}\tensor{\tilde{\sigma}}{_i_j}
			&=\partial_t\tensor{\tilde{\nabla}}{_\infty}\tensor{\tilde{\sigma}}{_i_j}
			-\tensor{\tilde{\Gamma}}{^\infty_0_\infty}\tensor{\tilde{\nabla}}{_\infty}\tensor{\tilde{\sigma}}{_i_j}
			-2\tensor{\tilde{\Gamma}}{^k_0_(_i_|}
			\tensor{\tilde{\nabla}}{_\infty}\tensor{\tilde{\sigma}}{_|_j_)_k}\\
			&=t^{w-1}(1+\lambda\rho)^{w-1}(w-2)(w-3)\lambda\tensor{\sigma}{_i_j},
		\end{split}\\
		\tensor{\tilde{\nabla}}{_\infty}\tensor{\tilde{\nabla}}{_0}\tensor{\tilde{\sigma}}{_i_j}
		&=\tensor{\tilde{\nabla}}{_0}\tensor{\tilde{\nabla}}{_\infty}\tensor{\tilde{\sigma}}{_i_j}
		-2\tensor{\tilde{R}}{_\infty_0^k_(_i}\tensor{\tilde{\sigma}}{_j_)_k}
		=\tensor{\tilde{\nabla}}{_0}\tensor{\tilde{\nabla}}{_\infty}\tensor{\tilde{\sigma}}{_i_j},\\
		\tensor{\tilde{\nabla}}{_\infty}\tensor{\tilde{\nabla}}{_\infty}\tensor{\tilde{\sigma}}{_i_j}
		&=\partial_\rho\tensor{\tilde{\nabla}}{_\infty}\tensor{\tilde{\sigma}}{_i_j}
		-2\tensor{\tilde{\Gamma}}{^k_\infty_(_i_|}
		\tensor{\tilde{\nabla}}{_\infty}\tensor{\tilde{\sigma}}{_|_j_)_k}
		=t^{w}(1+\lambda\rho)^{w-2}(w-2)(w-3)\lambda^2\tensor{\sigma}{_i_j},\\
		\begin{split}
			\tensor{g}{^k^l}\tensor{\tilde{\nabla}}{_k}\tensor{\tilde{\nabla}}{_l}\tensor{\tilde{\sigma}}{_i_j}
			&=\partial_{x^k}\tensor{\tilde{\nabla}}{_l}\tensor{\tilde{\sigma}}{_i_j}
			-\tensor{\tilde{\Gamma}}{^m_k_l}\tensor{\tilde{\nabla}}{_m}\tensor{\tilde{\sigma}}{_i_j}
			-2\tensor{\tilde{\Gamma}}{^m_k_(_i_|}\tensor{\tilde{\nabla}}{_l}\tensor{\tilde{\sigma}}{_|_j_)_m}\\
			&\phantom{=\;}
			-\tensor{\tilde{\Gamma}}{^\infty_k_l}\tensor{\tilde{\nabla}}{_\infty}\tensor{\tilde{\sigma}}{_i_j}
			-2\tensor{\tilde{\Gamma}}{^\infty_k_(_i_|}
			\tensor{\tilde{\nabla}}{_l}\tensor{\tilde{\sigma}}{_|_j_)_\infty}
			-\tensor{\tilde{\Gamma}}{^0_k_l}\tensor{\tilde{\nabla}}{_0}\tensor{\tilde{\sigma}}{_i_j}
			-2\tensor{\tilde{\Gamma}}{^0_k_(_i_|}
			\tensor{\tilde{\nabla}}{_l}\tensor{\tilde{\sigma}}{_|_j_)_0}\\
			&=-t^{w}(1+\lambda\rho)^{w}
			(\Delta\tensor{\sigma}{_i_j}-2(n(w-2)-2)\lambda\tensor{\sigma}{_i_j})
		\end{split}
	\end{align*}
	and hence
	\begin{equation*}
		\begin{split}
			\tilde{\Delta}\tensor{\tilde{\sigma}}{_i_j}
			&=-2t^{-1}\tensor{\tilde{\nabla}}{_0}\tensor{\tilde{\nabla}}{_\infty}\tensor{\tilde{\sigma}}{_i_j}
			+2t^{-2}\rho
			\tensor{\tilde{\nabla}}{_\infty}\tensor{\tilde{\nabla}}{_\infty}\tensor{\tilde{\sigma}}{_i_j}
			-t^{-2}(1+\lambda\rho)^{-2}
			\tensor{g}{^k^l}\tensor{\tilde{\nabla}}{_k}\tensor{\tilde{\nabla}}{_l}\tensor{\tilde{\sigma}}{_i_j}\\
			&=t^{w-2}(1+\lambda\rho)^{w-2}
			(\Delta+4\lambda-2(w-2)(n+w-3)\lambda)\tensor{\sigma}{_i_j}.
		\end{split}
	\end{equation*}
	Consequently, $\ambientLichnerowicz\tilde{\sigma}=(\tilde{\Delta}-2\mathring{\tilde{W}})\tilde{\sigma}$ is
	given by \eqref{eq:AmbLichnerowiczConfEinstein}.
\end{proof}

\begin{thm}
	\label{thm:CriticalGJMSForConfEinstein}
	Let $(M,[g])$ be a conformally Einstein manifold with $\dim M=n\ge 3$,
	and $g\in[g]$ an Einstein representative with Schouten tensor $\tensor{P}{_i_j}=\lambda\tensor{g}{_i_j}$.
	Then, the action of $P_k$
	restricted to $\caS^g_\mathrm{TT}(-n/2+2+k)$ is given by \eqref{eq:Intro:CharacteristicGJMSOperatorOnTTTensor}.
\end{thm}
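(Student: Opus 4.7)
The plan is to combine Theorem \ref{thm:GJMSAsLaplacianPower} with Lemma \ref{lem:AmbientLichnerowiczOfConfEinstein}: I will exhibit an explicit ambient extension of the ambient lift of $\varphi$ whose form is preserved by $\ambientLichnerowicz$, so that $\ambientLichnerowicz^k\tilde{\sigma}$ can be read off by pure iteration. Concretely, for $\varphi\in\caS^g_\mathrm{TT}(-n/2+2+k)$, set $w=-n/2+2+k$ and define
\[
	\tensor{\tilde{\sigma}}{_i_j}=t^w(1+\lambda\rho)^w\tensor{\varphi}{_i_j},
	\qquad \tensor{\tilde{\sigma}}{_0_I}=\tensor{\tilde{\sigma}}{_\infty_I}=0.
\]
Lemma \ref{lem:AmbientLichnerowiczOfConfEinstein} is stated precisely for tensors of this form, and—crucially—its output is again of the same form (with $w$ shifted to $w-2$ and $\varphi$ replaced by the operator-in-brackets applied to $\varphi$), so the iteration closes on itself.

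The first step is to verify that $\tilde{\sigma}$ is an extension of the ambient lift $\tilde{\varphi}$. The contraction $T\contraction\tilde{\sigma}$ vanishes identically because the only nonzero components are the $\tensor{\tilde{\sigma}}{_i_j}$, and the $\tilde{g}$-trace equals $t^{w-2}(1+\lambda\rho)^{w-2}\tr_g\varphi=0$. A short calculation using the explicit Christoffel symbols listed before Lemma \ref{lem:AmbientLichnerowiczOfConfEinstein} reduces the $M$-components of $\divergence_{\tilde{g}}\tilde{\sigma}$ to $t^{w-2}(1+\lambda\rho)^{w-2}\nabla^j\tensor{\varphi}{_i_j}$ and the $0$- and $\infty$-components to multiples of $\tr_g\varphi$, all of which vanish because $\varphi$ is TT. Hence $\tilde{\sigma}\in\tilde{\caS}^X_\mathrm{TT}(w)$ (or $\tilde{\caS}^X_\mathrm{aTT}(w)$ when $n$ is even), and by the uniqueness statement in Lemma \ref{lem:AmbientLift} its restriction to $\caG$ is $\tilde{\varphi}$.

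Next I would iterate Lemma \ref{lem:AmbientLichnerowiczOfConfEinstein}. A single application yields $\ambientLichnerowicz\tilde{\sigma}=t^{w-2}(1+\lambda\rho)^{w-2}(\Delta_\mathrm{L}-4(n-1)\lambda-2(w-2)(n+w-3)\lambda)\varphi$, which is again of the form required by the lemma. Induction then gives
\[
	\ambientLichnerowicz^k\tilde{\sigma}
	=t^{w-2k}(1+\lambda\rho)^{w-2k}
	\prod_{m=0}^{k-1}\bigl(\Delta_\mathrm{L}-4(n-1)\lambda-2(w-2m-2)(n+w-2m-3)\lambda\bigr)\varphi.
\]
Substituting $w=-n/2+2+k$ gives $w-2m-2=-n/2+k-2m$ and $n+w-2m-3=n/2+k-2m-1$, reproducing exactly the factors of \eqref{eq:Intro:CharacteristicGJMSOperatorOnTTTensor}. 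To conclude, I would invoke the standard identity $\tr_g\Delta_\mathrm{L}=\Delta\tr_g$ valid on any Einstein metric, which shows that each factor $\Delta_\mathrm{L}-c$ preserves trace-free tensors; the product applied to $\varphi$ is therefore already trace-free, so the $\tf_{\bm{g}}$-projection in the definition of $P_k$ acts trivially and the claimed formula follows. The only mildly delicate point is the TT-verification of the ansatz in the second paragraph; everything else is an essentially algebraic consequence of Lemma \ref{lem:AmbientLichnerowiczOfConfEinstein}.
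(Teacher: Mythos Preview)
Your proposal is correct and follows essentially the same route as the paper: define the ansatz $\tilde{\sigma}_{ij}=t^w(1+\lambda\rho)^w\varphi_{ij}$ (with all other components zero), verify it is a TT extension of the ambient lift, and then iterate Lemma~\ref{lem:AmbientLichnerowiczOfConfEinstein}. The paper's proof is slightly terser in checking the TT condition (it records only the covariant derivatives $\tilde{\nabla}_k\tilde{\sigma}_{ij}$ and $\tilde{\nabla}_\infty\tilde{\sigma}_{0i}=\tilde{\nabla}_0\tilde{\sigma}_{\infty i}=\tilde{\nabla}_\infty\tilde{\sigma}_{\infty i}=0$) and does not spell out the final trace-free observation, but the substance is identical.
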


\begin{proof}
	Let $\varphi=t^{-n/2+2+k}\overline{\varphi}\in\caS^g_\mathrm{TT}(-n/2+2+k)$ and
	$\tilde{\sigma}=(1+\lambda\rho)^{-n/2+2+k}\varphi$. Then
	\begin{equation*}
		\tensor{\tilde{\nabla}}{_k}\tensor{\tilde{\sigma}}{_i_j}
		=t^{-n/2+2+k}(1+\lambda\rho)^{-n/2+2+k}\tensor{\nabla}{_k}\tensor{\overline{\varphi}}{_i_j},\qquad
		\tensor{\tilde{\nabla}}{_\infty}\tensor{\tilde{\sigma}}{_0_i}
		=\tensor{\tilde{\nabla}}{_0}\tensor{\tilde{\sigma}}{_\infty_i}
		=\tensor{\tilde{\nabla}}{_\infty}\tensor{\tilde{\sigma}}{_\infty_i}
		=0.
	\end{equation*}
	Since $\overline{\varphi}$ is a TT-tensor on $(M,g)$, $\tilde{\sigma}$ itself is a TT-tensor with respect to
	$\tilde{g}$, and hence is an extension of the ambient lift of $\varphi$.
	We may compute $\ambientLichnerowicz^k\tilde{\sigma}$ by Lemma \ref{lem:AmbientLichnerowiczOfConfEinstein}.
	By taking the value along $\caG$ and trivializing with respect to $g$,
	we obtain \eqref{eq:Intro:CharacteristicGJMSOperatorOnTTTensor}.
\end{proof}

Now we prove our main theorem.

\begin{proof}[Proof of Theorem \ref{thm:QCurvLocalMaximum}]
	Let $\varphi=\caK_{[g]}\xi+\varphi^g_\mathrm{TT}$ be the decomposition
	of $\varphi=\dot{\bm{g}}_t|_{t=0}$ with respect to \eqref{eq:TTDecomposition}
	and $\Xi_t$ the flow generated by $\xi$.
	Then $\bm{g}'_t=\Xi_{-t}^*\bm{g}_t$ satisfies $\dot{\bm{g}}'_t|_{t=0}=\varphi^g_\mathrm{TT}$
	and the total $Q$-curvature of $\bm{g}'_t$ is equal to $\overline{Q}_t$.
	Therefore
	\begin{equation*}
		\left.\frac{d^2}{dt^2}\overline{Q}_t\right|_{t=0}
		=\left.\frac{d^2}{dt^2}\overline{Q}'_t\right|_{t=0}
		=-\frac{1}{4}\int_M\braket{P\varphi^g_\mathrm{TT},\varphi^g_\mathrm{TT}},
	\end{equation*}
	and thus \eqref{eq:SecondDerivativeOfTotalQForConfEinstein} follows from
	Theorem \ref{thm:CriticalGJMSForConfEinstein}.
	Under the assumption of the latter half of the theorem, any eigenvalue of
	$\Delta_\mathrm{L}|_{\caS_\mathrm{TT}^g(2)}-4(n-1)\lambda+4m(n-2m-1)\lambda$
	is strictly positive for $0\le m\le n/2-1$.
	Therefore, if $\varphi^g_\mathrm{TT}\not=0$,
	the second derivative of $\overline{Q}_t$ at $t=0$ is negative.
\end{proof}

\end{document}